\newtheorem{pro}{Proposition}
\begin{document}
%%%%% title : short title may not be used but TITLE is required.
% \title{TITLE}
% \title[short title]{TITLE}
\title{High order multiscale methods for advection-diffusion equation in highly oscillatory regimes: application to surfactant diffusion and generalization to arbitrary domains}

%%%%% author(s) :
% single author:
% \author[name in running head]{AUTHOR\corrauth}
% [name in running head] is NOT OPTIONAL, it is a MUST.
% Use \corrauth to indicate the corresponding author.
% Use \email to provide email address of author.
% \footnote and \thanks are not used in the heading section.
% Another acknowlegments/support of grants, state in Acknowledgments section
% \section*{Acknowledgments}
\author[C.~Astuto]{Clarissa Astuto\corrauth}
\address{Applied Mathematics and Computational Science, King Abdullah University of Science and Technology (KAUST),  4700, Thuwal, Saudi Arabia}
\email{{\tt clarissa.astuto@unict.it} (C.~Astuto)}

% multiple authors:
% Note the use of \affil and \affilnum to link names and addresses.
% The author for correspondence is marked by \corrauth.
% use \emails to provide email addresses of authors
% e.g. below example has 3 authors, first author is also the corresponding
%      author, author 1 and 3 having the same address.
% \author[Zhang Z R et.~al.]{Zhengru Zhang\affil{1}\comma\corrauth,
%       Author Chan\affil{2}, and Author Zhao\affil{1}}
% \address{\affilnum{1}\ School of Mathematical Sciences,
%          Beijing Normal University,
%          Beijing 100875, P.R. China. \\
%           \affilnum{2}\ Department of Mathematics,
%           Hong Kong Baptist University, Hong Kong SAR}
% \emails{{\tt zhang@email} (Z.~Zhang), {\tt chan@email} (A.~Chan),
%          {\tt zhao@email} (A.~Zhao)}
% \footnote and \thanks are not used in the heading section.
% Another acknowlegments/support of grants, state in Acknowledgments section
% \section*{Acknowledgments}

%%%%% Begin Abstract %%%%%%%%%%%
\begin{abstract}
In this paper, we propose high order numerical methods to solve a 2D advection diffusion equation, in the highly oscillatory regime. We use an integrator strategy that allows the construction of arbitrary high-order schemes {leading} to an accurate approximation of the solution without any time step-size restriction. 
This paper focuses on the multiscale challenges {in time} of the problem, that come from the velocity, an $\varepsilon-$periodic function, whose expression is explicitly known. $\varepsilon$--uniform third order in time numerical approximations are obtained. 
For the space discretization, this strategy is combined with high order finite difference
schemes. Numerical experiments show that the proposed
methods {achieve} the expected order of accuracy, and it is validated by several tests across diverse domains and  boundary conditions. The novelty of the paper consists of introducing a numerical scheme that is high order accurate in space and time, with a particular attention to the dependency on a small parameter in the time scale. The high order in space is obtained enlarging the interpolation stencil already established in \cite{COCO2013464}, and further refined in \cite{cocohigh}, with a special emphasis on the squared boundary, especially when a Dirichlet condition is assigned. In such case, we compute an \textit{ad hoc} Taylor expansion of the solution to ensure that there is no degradation of the accuracy order at the boundary. On the other hand, the high accuracy in time is obtained extending the work proposed in \cite{astuto2023time}. The combination  of high-order accuracy in both space and time is particularly significant due to the presence of two small parameters—$\delta$ and $\varepsilon$—in space and time, respectively. 
\end{abstract}
%%%%% end %%%%%%%%%%%

%%%%% AMS/PACs/Keywords %%%%%%%%%%%
%\pac{}
%\ams{52B10, 65D18, 68U05, 68U07}
\keywords{high order discretization, time multi-scale, advection-diffusion equation, surfactant, oscillating trap, arbitrary domains}

%%%% maketitle %%%%%
\maketitle

%%%% Start %%%%%%
\section{Introduction}
In this paper, we develop numerical schemes to solve a model of the diffusion of particles within a fluid in the presence of an oscillating flow, caused by the oscillation of a {specific} body or trap. This topic has practical applications, particularly in understanding the relationship between living cell membranes (acting {as} traps) and diffusing particles (for example vital substances). In such an application, an important factor would be the rate at which these substances are captured by the trap. To investigate the capture rate, a biomimetic model {was} created \cite{corti2014out,corti2015trapping,Raudino20168574}, where an oscillating air bubble emulates a fluctuating cell, and the flow of charged surfactants represents the diffusing substances (see Fig.~\ref{fig_delta} (a)). In this specific model, the surfactants consist of anions and cations with different configurations: the cations are hydrophilic, while the anions have a hydrophilic head and a hydrophobic tail, leading to their absorption at the air-water interface. 
%%%%%
The investigation of shape oscillations of drops immersed in a fluid with surfactants has been largely investigated, because of the multiple applications in nuclear physics, meteorology and chemical engineering \cite{rayleigh1882xx,cohen1962deformation,RENEKER20082387}. In \cite{lu1991shape}, the authors study the parameters that influence the oscillation frequency of an interface, with and without surfactants. 
\begin{figure}
	\centering
	\begin{minipage}[b]
		{.49\textwidth}
		\centering
		\begin{overpic}[abs,width=0.8\textwidth,unit=1mm,scale=.25]{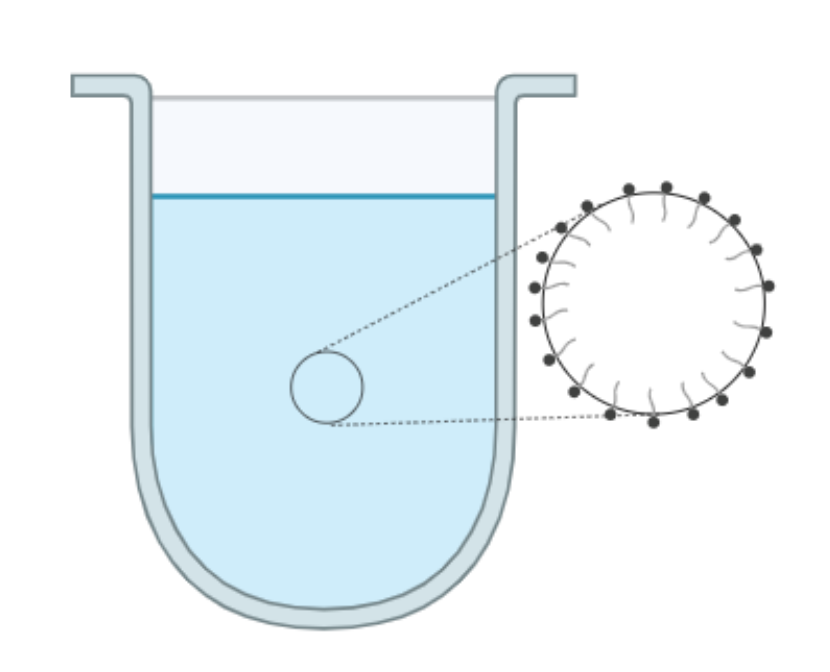}
			\put(-4,45){(a)}
		\end{overpic}
	\end{minipage}\hfill
	\begin{minipage}[b]{.49\textwidth}
		\begin{overpic}[abs,width=0.85\textwidth,unit=1mm,scale=.25]{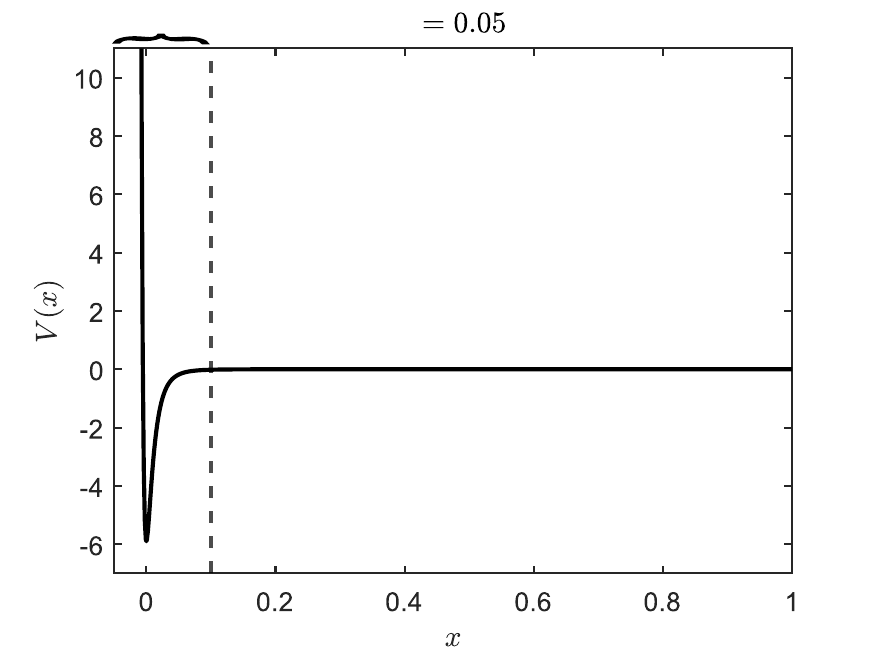}
			\put(27.5,44){{$\delta$}}
			\put(9,46){{$\Omega^\delta_{\rm b}$}}
			\put(45,45){{$\Omega_{\rm f}$}}
			\put(-1.,45){(b)}
		\end{overpic}
	\end{minipage}
	\caption{\textit{(a): Experimental setup of the diffusion-trapping of surfactants in presence of a trap. The zoom-in on the right shows the composition of the anions when they are stuck at the surface of the air bubble: the hydrophobic tails are inside the air bubble, while the hydrophilic heads lay on the surface. (b) Scheme of the potential $V(x)$, defined in Eq.~\eqref{eq_V_expr}, where $\delta$ is the thickness of the attractive-repulsive layer.}}
	\label{fig_delta}
\end{figure}

When studying the diffusion of particles in a moving fluid, it is fundamental to refer to advection-diffusion equations, that are important in many branches of engineering and applied sciences \cite{allen1982numerical,chatwin1985mathematical,kay1990advection,steefel2018approaches}. In this type of  {equation}, two main terms appear: a non-dissipative, advective, hyperbolic term and a dissipative, diffusive, parabolic one. Numerical methods generally perform well when diffusion dominates the equation. {However}, when advection prevails, undesirable phenomena, such as spurious oscillations or excessive numerical diffusion, may happen, and some stability property needs to be satisfied \cite{adler2023stable}. One possible approach to solve this issue is the implementation of fine mesh refinement, e.g., satisfying a suitable stability condition on the mesh P\'{e}clet number \cite{Wesseling2023600} (if we consider central differences instead of an upwind discretization), although it may not always be practical due to the significant increase {in} computational cost. Other approaches have been proposed, e.g. in \cite{de1955relaxation}, where a relaxation method, based on finite differences schemes, was developed for one-dimensional advection-diffusion equation, in the steady state regime, and in the presence of constant coefficients.

The main focus of this paper is the development of a high-order accurate numerical {method} in space and time to solve advection-diffusion equations with highly oscillatory boundaries, with application to surfactant diffusion in presence of a trap, and extension to domains of arbitrary shapes (see Fig.~\ref{fig_arbitrary_domains}). Low order numerical schemes are more commonly used because of their simpler implementation. However, when considering efficiency, even if high-order methods require greater programming efforts and more computational operations per grid point, they can significantly reduce the number of grid points needed to achieve a given error tolerance (see, e.g., Fig.~\ref{fig_cpu}). In multi-dimensional scenarios, this reduction can lead to substantial decrease of computational time and memory requirements, potentially by orders of magnitude. 

\begin{figure}
	\centering
	\begin{minipage}[b]
		{.33\textwidth}
		\centering
		\begin{overpic}[abs,width=1.\textwidth,unit=1mm,scale=.25]{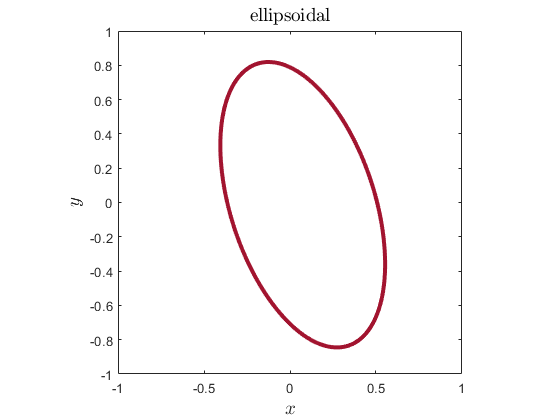}
			\put(4,34){(a)}
		\end{overpic}
	\end{minipage}\hfill
	\begin{minipage}[b]{.33\textwidth}
		\begin{overpic}[abs,width=\textwidth,unit=1mm,scale=.25]{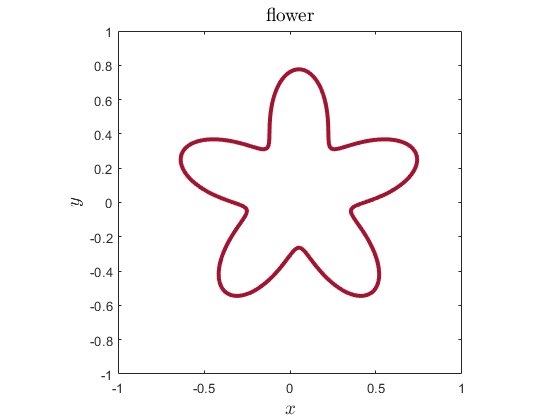}
			\put(4,34){(b)}
		\end{overpic}
	\end{minipage}
 \begin{minipage}[b]{.33\textwidth}
		\begin{overpic}[abs,width=\textwidth,unit=1mm,scale=.25]{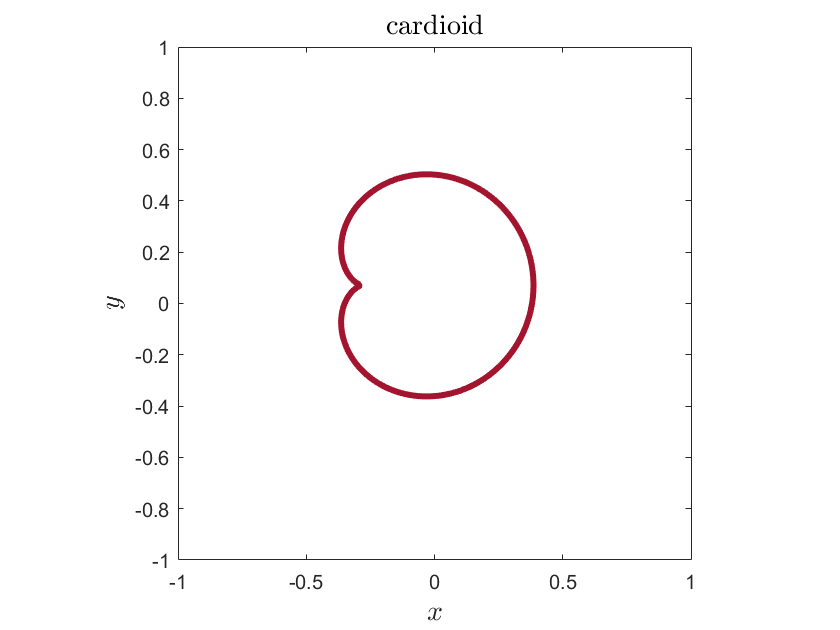}
			\put(4,34){(c)}
		\end{overpic}
	\end{minipage}
	\caption{\textit{Shape of different domains: (a) ellipsoidal, (b) flower-shaped and (c) cardioid-shape domain.}}
	\label{fig_arbitrary_domains}
\end{figure}

There are different classes of high order methods for solving time dependent convection dominated PDEs, from the high order weighted essentially non-oscillatory (WENO) schemes, capable of maintaining the robustness that is common to Godunov-type methods \cite{BALSARA2000405,shu2016high}, to the implicit-explicit (IMEX) Runge Kutta methods suitable for time  dependent partial differential systems which contain stiff and non stiff terms \cite{boscarino2016high,pareschi2000implicit,sebastiano2023high,astuto2023self}. Here, we are interested in solving an advection-diffusion equation, in the highly oscillatory regime. This work is part of a long time project, where the fluid velocity is a $\varepsilon$--periodic function (with $\varepsilon \ll 1$), that we suppose explicitly known. Numerous examples of oscillatory flows, both in terms of modeling and numerical treatment, can be encountered in the existing literature (\cite{allen1982numerical,chatwin_1975,10.1093/imamat/45.2.115,smith1982contaminant}).

When different time scales appear in the same equation, standard numerical methods produce errors of the order $\Delta t^p/\varepsilon^q$ (where $\Delta t$ is the time step), for some positive $p$ and $q$. To achieve the desired level of accuracy, there exists a restriction on the time step, i.e., $\Delta t \ll \varepsilon^{q/p}$, that becomes prohibitive for small values of $\varepsilon$. We will follow the strategy adopted in \cite{chartier2015uniformly,chartier2020new,chartier2022derivative,crouseilles2013asymptotic,crouseilles2017uniformly} although in the different contexts of Vlasov-Poisson equations, Klein-Gordon and nonlinear Schr\"odinger equations, to obtain a robust scheme that is able to deal with a large range of $\varepsilon \in (0,1]$ (being small or not), since our goal is to obtain a numerical scheme that is uniformly accurate in $\varepsilon$. We start from a first order scheme, as we did in \cite{astuto2023time}, and proceed to derive second and third order methods through recursive steps.

%We propose a third order in time scheme which leads to an accurate approximation of the solution without any time step-size restrictions.  Uniform in $\varepsilon$ time approximations are obtained with errors, and at a cost, that are independent of the oscillation frequency.

The problem of surfactants diffusion, that are adsorbed at the surface of a moving cell, has been investigated by several authors \cite{Raudino20168574,astuto2023multiscale,CiCP-31-707,SIAM2_ganesan2012arbitrary,SIAM3_morgan2015mathematical,SIAM4_xu2013analytical,WIEGEL1983283,BERG1977193} and the starting model for the evolution of single species carriers is the one described in \cite{astuto2023multiscale}, where the authors introduced the local concentration of 
ions $c= c(\vec{x},t)$, whose time evolution in a static fluid is governed by the conservation law
\begin{equation}
	\displaystyle \frac{\partial c}{\partial t}=-\nabla\cdot J,
	\label{equation_flux}
\end{equation}
where the expression for the flux $J$ is 
\[J = -D\left( \frac{\partial c}{\partial x} + c \frac{\partial V}{\partial x} \right),\]
$D$ is the diffusion coefficient, and the expression for the potential $V$ is described in Eq.~\eqref{eq_V_expr}. For the sake of simplicity, we will only discuss the one-dimensional model (see \cite{astuto2023time,astuto2023multiscale} for more details and dimensions). We assume that the fluid domain, which is not affected by the bubble, is $\Omega^\delta_{\rm f} = [L\delta,1]$, and that the attractive-repulsive mechanism of the bubble is simulated inside a thin region $\Omega^\delta_{\rm b}= [-\delta,L\delta]$, with $L$ a constant of order 1. 
It means that the potential $V(x)=0$ for $x \in \Omega^\delta_{\rm f}$.

Particles in the proximity of the bubble are initially attracted {to} its surface. However, when a particle gets extremely close to the surface, it experiences a repulsive {force} due to the impermeability of the air bubble. To simulate this behaviour, in \cite{astuto2023multiscale} we choose the Lennard-Jones potential (LJ) as a prototypical attractive-repulsive potential, as follows
\begin{eqnarray}
	\label{eq_V_expr}
	V(x) = E\left( \left(\frac{x+\delta}{\delta}\right)^{-12} - 2\left(\frac{x+\delta}{\delta}\right)^{-6} \right),
\end{eqnarray}
where $\delta$ denotes the range of the potential and $E$ represents the depth of the well, (see Fig.~\ref{fig_delta} (b)). Now we have all the ingredients to define the flux

In \cite{astuto2023multiscale}, we proposed a \textit{multiscale model}, employing an asymptotic expansion in $\delta$ to capture the dynamics of adsorption-desorption phenomena. The {spatial} multiscale nature derives from the potential that is not negligible only in $\Omega^\delta_{\rm b}$, which is very small compared to the entire domain. Summarizing, the 1D {multiscale model} for a single carrier, {under} the low concentration approximation, can be obtained for $\delta \ll 1$ (in general, for $\delta/L_x \ll 1$ where $L_x$ is the length of the 1D domain) as follows: 
\begin{align*}
	\frac{\partial  c }{\partial t} &= D\frac{\partial^2  c }{\partial x^2} \quad  x \in [0,1]\\
	\frac{\partial  c }{\partial x} &= 0  \quad  x = 1, \qquad \mathcal{M}\frac{\partial  c }{\partial t} = D\frac{\partial  c }{\partial x}  \quad  x = 0
\end{align*}
and
\begin{equation}
	\label{expr_M}
	\mathcal{M} = \delta\int_{0}^{L+1}\exp\left(-U(\zeta)\right)d\zeta.
\end{equation}
where $U(\zeta)= \phi \left( \zeta^{-12} - 2\zeta^{-6} \right)$ is a non dimensional form of the potential $V(x)$, with $\zeta = 1 + x/\delta \in [0,L+1]$ the rescaled variable, $L$ is the distance at which the potential $U$ is negligible and $\displaystyle \phi = {E}/{k_BT}$. In \cite{astuto2023multiscale}, we posed $L=2$. 

When considering the flow velocity, the role of oscillating traps becomes crucial {for calculating} their rates of adsorption and desorption at their surface. In our work \cite{ASTUTO2023111880}, {we} introduce an advection term into the diffusion equation {in the two-dimensional case} to account for fluid movement caused by the oscillations of the bubble.

In the presence of a moving fluid, the conservation law for the local concentration of ions $c = c(\vec{x},t)$ is the same as \eqref{equation_flux}
\begin{equation}
	\frac{
		\partial c}{\partial t}=-\nabla\cdot \vec J, \quad {\rm in }\,\,\Omega, \quad t \in [0, t_{\rm fin}]
\end{equation}
where $\mathcal{S} \subset \mathbb{R}^2$ is a rectangular domain, and $\Omega = \mathcal{S}\setminus \mathcal{B}$ the computational domain where $\mathcal{B}$ is a circle centered in $(0,0)$, with radius $R_{\mathcal{B}}$ (see Fig.~\ref{fig_discretization} (a)). However, this time the flux term $\vec J$ contains a diffusion and an advection term, 
\begin{equation} \label{eq:flux}
	\vec J=\ -D\nabla c - c\, \vec{u}, \quad {\rm in }\,\,\Omega
\end{equation}
where $t_{\rm fin}>0$, $\vec{u} = \vec{u}(\vec x,t/\varepsilon) \in \mathbb{R}^2$ is the explicitly known velocity, and it is assumed to be a periodic vector function of time with period equal to  $\varepsilon \in ]0, \varepsilon_0]$, for some $\varepsilon_0 > 0$. We add a subscript $\varepsilon$ on the concentration $c_\varepsilon = c$, to emphasize its dependence on the oscillation period, and at the end, the system reads
\begin{equation}
	\label{eq_ode}
	\frac{\partial c_\varepsilon}{\partial t} = D\Delta c_\varepsilon + \nabla \cdot (c_\varepsilon \vec{u}(t/\varepsilon)),  \quad {\rm in }\,\,\Omega.
\end{equation}
{From now on, we omit the explicit dependence of $\vec u$ in space, while we keep its dependence on $t/\varepsilon$.} The boundary of the domain is defined as $\Gamma = \partial \Omega = \Gamma_\mathcal{S} \cup \Gamma_\mathcal{B}$; see Fig.~\ref{fig_discretization} (a). 

Eq.~\eqref{eq_ode} is completed with homogeneous Neumann boundary conditions in $\Gamma_\mathcal{S}$ and absorption-desorption boundary conditions in $\Gamma_\mathcal{B}$ (see \cite{astuto2023multiscale} for more details), i.e., in other words
\begin{eqnarray}
	\displaystyle \nabla  c_\varepsilon \cdot n &=& 0 \quad  {\rm  on }\,\, \Gamma_\mathcal{S}\\ \label{eq_bc_M_2D}
	\displaystyle \mathcal M\frac{\partial  c_\varepsilon}{\partial t} &=& \mathcal M D \frac{\partial ^2 c_\varepsilon}{\partial \tau ^2}-D\frac{\partial  c_\varepsilon}{\partial n }\quad \text{ on $\Gamma_\mathcal{B}$},
\end{eqnarray}
%\giovanni{L'eq. (2.5) è sbagliata. Sarebbe corretta solo in 2D. Mi dispiace, ma i test numerici sono da rifare. L'operatore di Laplace-Beltrami sulla superficie della sfera in simmetria cilindrica diventa:
where $n$ is the outgoing normal vector to $\Gamma$, and $\tau$ is the tangent vector to $\Gamma_\mathcal{B}$. Eq.~\eqref{eq_bc_M_2D} is the analogue expression of Eq.~\eqref{expr_M}, but in higher dimension. 

To close the system~(\ref{eq_ode}--\ref{eq_bc_M_2D}), we add an initial condition 
\begin{eqnarray}
	\label{eq_IC}
	c _\varepsilon(t=0) &=&  c _\varepsilon^0 \quad {\rm in }\,\,\Omega
\end{eqnarray}
that does not depend on $\varepsilon$.

In this work, we focus our attention on the time multiple scale nature of the problem. %
Particularly relevant is the adsorption rate when the bubble is exposed to intense forced oscillations \cite{Raudino20168574}. The oscillation frequency is of the order of hundredths of \textit{Hz}, while the diffusing time is of the order of hours: these two different scales in time introduce in the model, as we already mentioned, multiscale challenges.

The plan of the paper is the following: in Section~\ref{section_numerical_scheme} we show different space discretizations, starting from a second order scheme for a drift-diffusion equation with constant coefficients, and going on with a fourth order numerical scheme for a drift-diffusion equation with variable coefficients. Next, we consider a more complicate system, introducing a circular hole in the domain, to be more realistic and closer to the main application of the paper. Regarding the space discretization, we apply a 9-point stencil (5 points for each space direction) fourth order numerical scheme \cite{Trottemberg:MG}, and in Section~\ref{section_space_discr_ghost} we show in details how to deal with the boundary conditions assigned in the circular hole. In Section~\ref{section_time_discr} we focus on the time derivative discretization, starting with a first and a second-order numerical schemes, that we already proposed in \cite{astuto2023time}, and we introduce a third order numerical scheme, showing how to derive it from the previous ones, with a recursive technique. In Section~\ref{section_results}, we show the accuracy tests of the numerical schemes that we have introduced, along with their applicability to new geometries. At the end we draw some conclusions.

\section{The numerical scheme{s}}
\label{section_numerical_scheme}
In this section, we first perform a second order space discretization, in a squared domain $\Omega = [a,b]^2$, and then we go on with higher order numerical schemes. The feature of the section is the following: we design a second order space discretization  for the advection-diffusion equation, starting with constant coefficients. The second step is the description of the 9-point stencil (5 points for each space direction) fourth order discretization, with variable coefficients. After {familiarizing the reader} with high order space discretizations in the case of a regular domain, we {proceed} to the description of the most interesting case of this {work:} a high order discretization in the presence of irregular domains and ghost points.

Next, we will present a first order implicit time discretization applied to Eq.~\eqref{eq_ode}, and discuss second and third order extensions afterwards.

\subsection{Space discretization}
\label{section_space_discr}
We use a uniform square Cartesian discretization, with $\Delta x = \Delta y = h$, and the set of grid points is ${\Omega}_h = (x_h,y_h) = \{(x_i,y_j)=(ih,jh), (i,j) \in \{0,\cdots,N\}^2 \}$, where $N \in \mathbb{N}$ and 
$h = L_x/N$ with $L_x = L_y = b-a$. We define as $\Gamma_h$ the set of boundary points, such that $\Gamma_h = \{ P = (x_i,y_j): \{i\in \{0,N\}, \, \forall j\} \cup \{j\in\{0,N\},\, \forall i\}\}$.
Let us consider Eq.~\eqref{eq_ode} when the advection term has a constant value for $u$. For simplicity, we drop the subscript $\varepsilon$, and it becomes
\begin{align}
	\label{eq_ode_constant}
	\frac{\partial c}{\partial t} = D\Delta c + u\nabla \cdot c  \quad {\rm in }\,\,\Omega,
\end{align}
where $D,u \in \mathbb R$. To consider a more general case, here we consider Dirichlet boundary conditions, such that
\begin{equation}
	\label{eq_Dir_bc}
	c = f, \quad {\rm on }\,\,\partial \Omega,
\end{equation}
where $f:\mathbb R^2 \to \mathbb R$.

In the following section we start with a second order discretization for the space derivatives.  

\subsubsection{Second order space discretization with constant coefficients}
%{\color{red}modificare senza column vector}
%If we represent the discretization of $ c$, $c^0 $ and $f$ as $ c_{h} = (\ldots,  c_{i,j}, \ldots), c^0_{h} = (\ldots,  c^{0}_{i,j}, \ldots),$ $ f_{h} = (\ldots,  \vec{f}_{i,j}, \ldots) \in \mathbb{R}^{N+1}$, t
The numerical solution at time $t$ is represented by the matrix $c_{h}:=c_h(t)$, whose components $c_{i,j}(t)$ are approximation of the exact solution on the grid points of $\Omega_h$,  i.e.\
$c_{i,j}(t)\approx c(t,x_i,y_j)$. 
The problem \eqref{eq_ode_constant} is then discretized in space %, leading to a linear system
\begin{align}
	\label{eq_2nd_space}
	\frac{\partial c_h}{\partial t} = L^{\rm 2nd}_h c_h + D_h^{\rm 2nd} c_h
\end{align}
where $L^{\rm 2nd}_h$ and $D_h^{\rm 2nd}$ %are $(N+1) \times (N+1)$ matrices 
represent the discretization of the space derivatives, defined as follows 
\begin{eqnarray} 
	\label{eq_Lh}
	L^{\rm 2nd}_h\, c_{h}\Big|_{i,j} &=& D \left( \frac{c_{i,j+1}  + c_{i,j-1} + c_{i+1,j}  + c_{i-1,j} - 4c_{i,j}}{h^2} \right)
	\\ \nonumber D_h^{\rm 2nd}\, c_{h}\Big|_{i,j} & = & u \left( \frac{c_{i+1,j} - c_{i-1,j} + c_{i,j+1} - c_{i,j-1}}{2h}  \right).	
\end{eqnarray}
Regarding the boundary conditions, we assume that $\forall P\in \Gamma_h$, we have that
\begin{equation}
	L^{\rm 2nd}_h\, c_{h}\Big|_{P} = 1\cdot c_{h} \Big|_{P} = f_h,
\end{equation}
where $f_h$ is the approximation of boundary condition on the grid points of $\Omega_h$. At the end, we consider the solution $c_{h}$ as a long column vector, and the operators $L^{\rm 2nd}_h$ and $Q^{\rm 2nd}_h$ are represented by $(N+1) \times (N+1)$,
large, sparse block  matrices.

After this description, we go to higher order accurate discretizations, and in the next section we describe a fourth order numerical scheme.
\subsubsection{Fourth order space discretization and boundary conditions}
\label{section_discr_space}
For the fourth order spatial discretization, we consider a formula that uses a 9-point stencil, (5 points in each space direction) \cite{leveque1998finite,boscarino2019high,boscarino2022high}. In this case, Eq.~\eqref{eq_2nd_space} becomes
\begin{align}
	\label{eq_4th_space}
	\frac{\partial c_h}{\partial t} = L^{\rm 4th}_h c_h + D_h^{\rm 4th} c_h
\end{align}
where
% \begin{align}
%     \partial^2_{xx}\, c\Big|_{x_j} \approx \frac{1}{12 h^2}(-c_{j-2} + 16\,c_{j-1} -30\,c_j + 16\,c_{j+1} -c_{j+2}), 
% \end{align}
% thus, in 2D it becomes 
\begin{align}
	\label{eq_L4th}
	L^{\rm 4th}_h\, c_{h}\Big|_{i,j} = & \frac{1}{12 h^2}(-c_{i,j-2} + 16\,c_{i,j-1}  + \\ & \nonumber 16\,c_{i,j+1} -c_{i,j+2} -c_{i-2,j} + 16\,c_{i-1,j}  + 16\,c_{i+1,j} -c_{i+2,j} -60\,c_{i,j}),
\end{align}
and 
\begin{equation}
	D_h^{\rm 4th}\, c_{h}\Big|_{i,j} = \frac{u}{12h}(c_{i,j-2} -8\,c_{i,j-1}  + 8\,c_{i,j+1} -c_{i,j+2} + c_{i-2,j} -8\,c_{i-1,j}  + 8\,c_{i+1,j} -c_{i+2,j})
\end{equation}
where $L_h^{\rm 4th}$ and $D_h^{\rm 4th}$ are a $(N+1) \times (N+1)$ matrices.

In the case of  $\{i,j\} \in \{i\in \{1,N-1\}, \, \forall j\} \cup \{j\in\{1,N-1\},\, \forall i\}$, that are those points in which at least one of the 9-point stencil lays outside of the grid points set, $\Omega_h$, we consider the following approximation. For the sake of simplicity, let us start from the following Taylor expansions of the function $c$ in 1D space dimension:
\begin{align*}
	c(x + h) &= c(x) + hc'(x) + \frac{h^2}{2} c''(x) + \frac{h^3}{3!}c'''(x)+ O(h^4)\\
	c(x - h) &= c(x) - hc'(x) + \frac{h^2}{2} c''(x) - \frac{h^3}{3!}c'''(x)+ O(h^4) \\
	c(x + 2h) &= c(x) + (2h)c'(x) + \frac{(2h)^2}{2} c''(x) + \frac{(2h)^3}{3!}c'''(x)+ O(h^4)\\
	c(x - 2h) &= c(x) - (2h)c'(x) + \frac{(2h)^2}{2} c''(x) - \frac{(2h)^3}{3!}c'''(x)+ O(h^4).
\end{align*}
% \begin{figure}[!ht]
% 	\centering
% 	% \begin{overpic}[abs,width=0.5\textwidth,unit=1mm,scale=.25]{Figures/V_x_delta_05.pdf}
% 	% \put(23.5,38){{$\delta$}}
% 	% \put(0,39){{attract.-repuls.}}
% 	% \end{overpic}
% 	\includegraphics[width=0.5\textwidth]{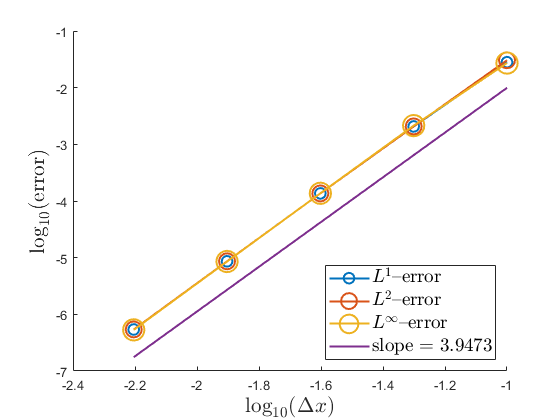}
% 	\caption{\textit{Accuracy error in $L^1,L^2,L^\infty$-norms, with $t_{\rm fin } = 0.1,$ for a fixed $\Delta t_{\rm ref} = 10^{-5}$ for Eq.~\eqref{eq_4th_space}. The domain is $\Omega = [-1,1]^2$,  $N_{\rm ref} = 640$, the initial condition is defined in Eq.~\eqref{eq_expr_IC_tests}  with $x_{m_1} = y_{m_1} = 0, \sigma = 0.1$ and homogeneous Dirichlet boundary conditions (i.e., $f = 0$ in Eq.~\eqref{eq_Dir_bc}).}}
% 	\label{fig_4th_space}
% \end{figure}
After some algebraic computation we have 
\[
c(x) = \frac{4\, c(x+2h) + 4\,c(x-2h) - c(x+h) - c(x-h)}{6},
\]
that at discrete levels becomes
\[
c_j = \frac{4\, c_{j+2} + 4\,c_{j-2} - c_{j+1} - c_{j-1}}{6},
\]
and from this formula we can extend it to 2D, and extrapolate those values ${c}_{i,j}, \{i,j\} \in \{i\in \{1,N-1\}, \, \forall j\} \cup \{j\in\{1,N-1\},\, \forall i\}$. For example, regarding the operator $L^{\rm 4th}_h$, if $i = 2$ and $j = N-1$, we have
\begin{align*}
	L^{\rm 4th}_h\, c_{h}\Big|_{2,N-1} = \frac{1}{12h^2}\left(12\,c_{1,N-1} + 12\,c_{3,N-1} + 12\,c_{2,N-2} + 12\,c_{2,N} - 48\,c_{2,N-1} \right).
\end{align*}

\subsubsection{Fourth order space discretization with variable coefficients}
\label{section_variable_coeff}
In this section, we add the dependence in space and time of the velocity $\vec u$, as our paper focuses on describing the time-oscillating movement of an obstacle (the air bubble in Fig.~\ref{fig_delta} (a)). In this case, Eq.~\eqref{eq_ode_constant} becomes
\begin{align}
	\label{eq_ode_variable}
	\frac{\partial c}{\partial t} = D\Delta c + \nabla \cdot(\vec u c)  \quad {\rm in }\,\, \Omega,
\end{align}
where $\vec u = \vec u(\vec x,t)$ is a known function.

We approximate ${\vec u}$ on the grid points $\Omega_h$, as $\vec{u}_{h}(t) = \{ \vec{u}_{i,j}(t): i\in \{0,N\}, j\in\{0,N\}\}$, where $\vec{u}_{i,j}(t) = [{u}^x_{i,j}(t),{u}^y_{i,j}(t)]$. {In problem \eqref{eq_ode_variable}, the} advection term is then discretized in space, leading to a new linear system 
\begin{align}
	\label{eq_4th_space_variable}
	\frac{\partial c_h}{\partial t} = L^{\rm 4th}_h c_h + {Q^{\rm 4th}_h}(\vec{u}_h) c_h
\end{align}
where $Q_h^{\rm 4th}(\vec{u}_h)$ is a $(N+1) \times (N+1)$ matrix, representing the discretization of the advection operator, and its product with the solution is defined as follows
\begin{align} \label{eq_Q4th}
	Q_h^{\rm 4th}\,\left( \vec{u}_h\right)\,c_{h}\Big|_{i,j} = {\frac{1}{12h}}&
	\begin{pmatrix}
		u^x_{i-2,j} & -8u^x_{i-1,j} & 8u^x_{i+1,j} & -u^x_{i+2,j}
	\end{pmatrix} \cdot \begin{pmatrix}
		c_{i-2,j} \\ c_{i-1,j} \\ c_{i+1,j} \\ c_{i+2,j} 
	\end{pmatrix} + \\ {\frac{1}{12h}} &
	\begin{pmatrix}
		u^y_{i,j-2} & -8u^y_{i,j-1} & 8u^y_{i,j+1} & -u^y_{i,j+2}
	\end{pmatrix} \cdot \begin{pmatrix}
		c_{i,j-2} \\ c_{i,j-1} \\ c_{i,j+1} \\ c_{i,j+2} 
	\end{pmatrix}, \nonumber
\end{align}
where we drop the explicit dependence in time of the components of $\vec u_h$, for simplicity.

\subsection{Irregular domain and ghost points}
\label{section_space_discr_ghost}
In this section, we describe the space discretization for Eqs.~(\ref{eq_ode}--\ref{eq_IC}). The domain is \\ $\Omega = ([-L_x/2,L_x/2]\times [-L_y/2,L_y/2])\setminus \mathcal{B}$, with $\mathcal{B}$ a circle centered in $(0,0)$ and radius $R_{\mathcal{B}}$ (see Fig.~\ref{fig_discretization} (a)), and the problem reads:
\begin{eqnarray} 
	\left\{
	\begin{array}{l}
		\displaystyle     \frac{\partial c_\varepsilon}{\partial t} = D\Delta c_\varepsilon + \nabla \cdot (c_\varepsilon \vec{u}(t/\varepsilon))  \quad \text{ in $\Omega$} \\
		\displaystyle \nabla  c_\varepsilon \cdot n = 0 \quad  \text{ on $\Gamma_\mathcal{S}$}\\ \label{eq:bc_ghost}
		\displaystyle \mathcal{M}\frac{\partial  c_\varepsilon}{\partial t} = 
		\mathcal{M} D \Delta_\perp c_\varepsilon
		%        \frac{\partial ^2 c_\varepsilon}{\partial \tau ^2}
		-D\frac{\partial  c_\varepsilon}{\partial n }\quad \text{ on $\Gamma_\mathcal{B}$}
	\end{array}
	\right.
\end{eqnarray}
where the expression for the velocity $\vec{u}(t/\varepsilon) = \vec{u}(x,y,t/\varepsilon)$ is known, $n$ is the outgoing normal vector to $\Gamma_\mathcal{B}$, and $\Delta_\perp = {\partial^2}/{\partial \tau^2}$ denotes the Laplace-Beltrami operator on the circumference of the circle
(see Fig.~\ref{fig_discretization} (a)).  
\begin{figure}[h]
	\centering
	\begin{minipage}[b]
		{.49\textwidth}
		\centering
		\begin{overpic}[abs,width=0.65\textwidth,unit=1mm,scale=.25]{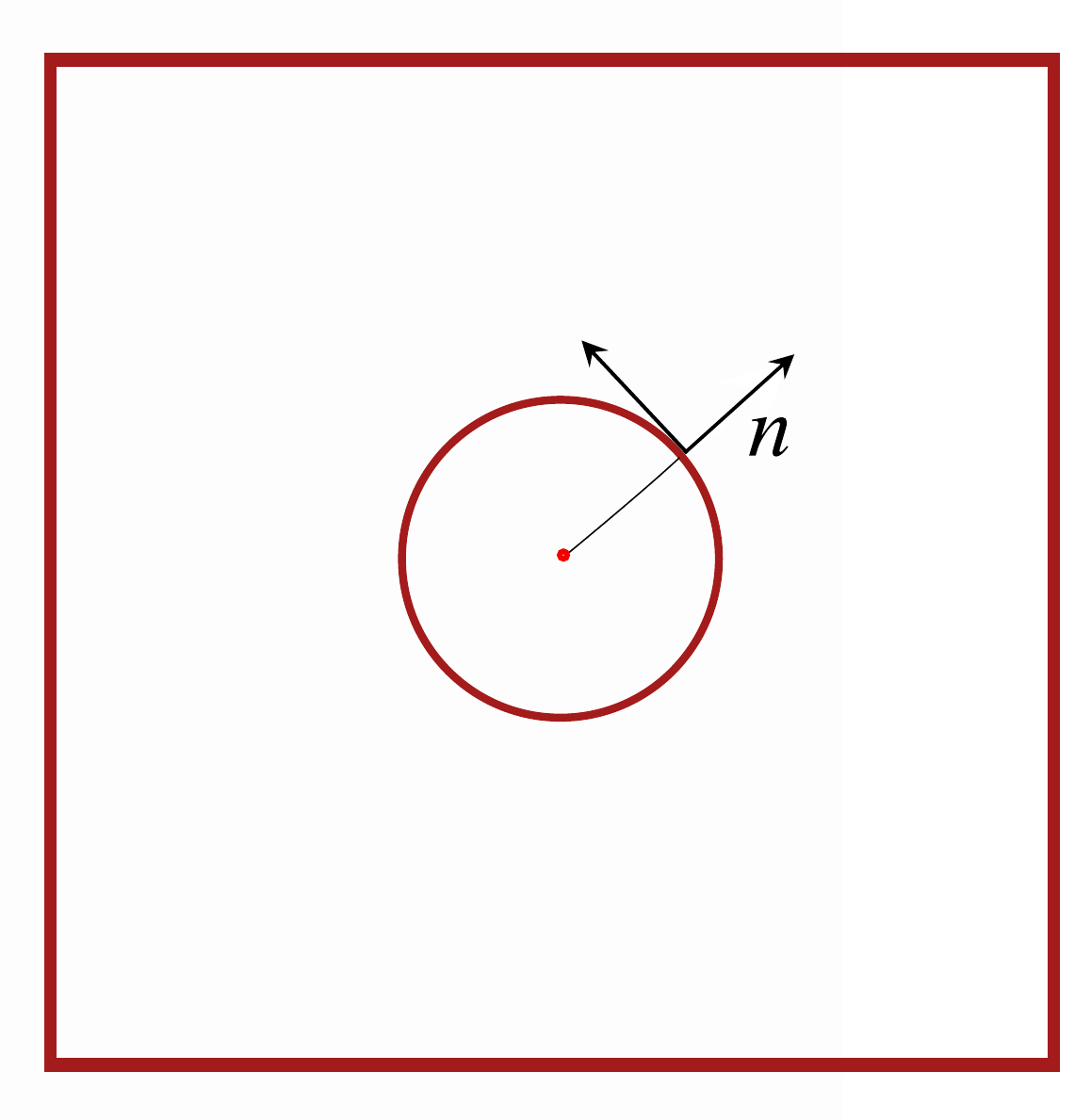}
			\put(-7.,40){(a)}
                \put(28,31){$\tau$}
			\put(23,19.5){$\mathcal{B}$}
			\put(25,38){$\Omega$}
			\put(41,36){$\Gamma_\mathcal{S}$}
			\put(13,24){$\Gamma_\mathcal{B}$}
		\end{overpic}	%\includegraphics[width=0.7\textwidth]{domains3D_squared}
	\end{minipage}\hfill
	\begin{minipage}[b]
		{.49\textwidth}
		\centering
		\begin{overpic}[abs,width=0.65\textwidth,unit=1mm,scale=.25]{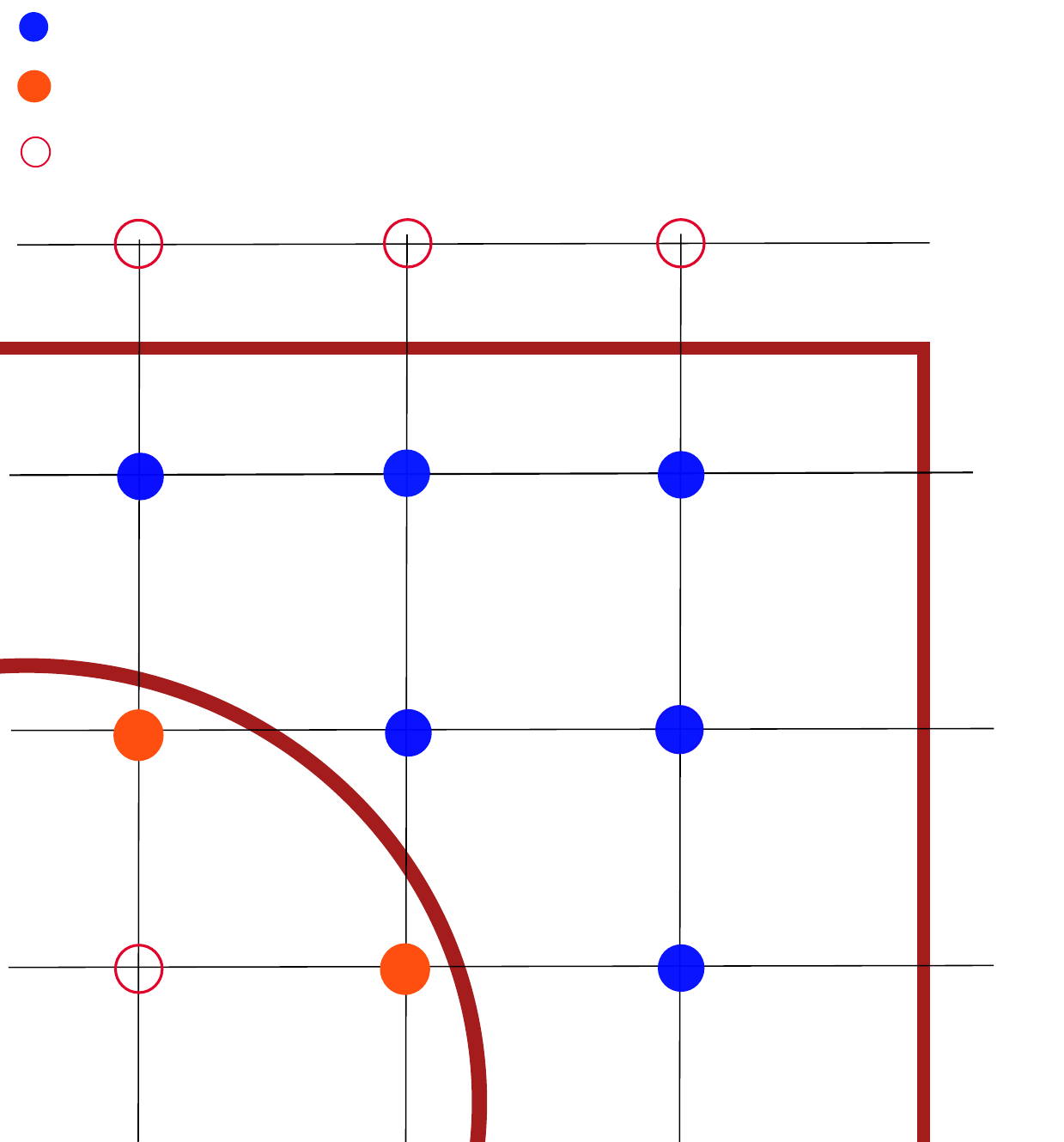}
			\put(2.5,43){{Inactive points}}
			\put(2.5,46){{Ghost points}}
			\put(2.5,49){{Inside points}}
			\put(-7.,40){(b)}
			\put(8,12){$\mathcal{B}$}
			\put(32,23){$\Omega$}
		\end{overpic}	%\includegraphics[width=0.75\textwidth]{classification_points3D}
	\end{minipage}
	\caption{\textit{(a): Representation of the domain $\Omega$ in 2D, where $\Gamma_\mathcal{S}$ is the external wall, $\mathcal{B} $ is the bubble with boundary $\Gamma_{\mathcal{B}}$ and radius $R_\mathcal{B}$. The vectors $n$ and $\tau$ are the outer and tangential unit vectors to $\Gamma_B$, respectively. (b): Classification of the inside (blue circles), ghost (orange circles) and inactive (red hole circles) points.}}
	\label{fig_discretization}
\end{figure}
\subsubsection{High order and ghost points}
The space discretization of the internal points of the domain $\Omega$ (see Fig.~\ref{fig_discretization} (a)) is analogue to the one {in} Section~\ref{section_space_discr}, with a particular attention to the boundary points close to $\Gamma_\mathcal{B}$. 
As we mentioned in the Introduction, the focus of this paper is {on achieving} uniform high order time discretization, and, in Section~\ref{section_time_discr}, we introduce a third order numerical scheme {for time integration}. Therefore, in this section, we propose a third order accurate spatial discretization for the ghost points. To obtain such an order, we need a 16-point stencil for the interpolation of the numerical solution at the ghost point $G$ (see  Fig.~\ref{stencil} (b)). Here we remark {that,} to obtain third order accuracy in {space,} we need a 16-point stencil due to the presence of first and second derivatives in the boundary {conditions.} However, in simpler scenarios like those with Dirichlet boundary conditions, such as in system \eqref{eq:Dirichlet_arbitrary}, the same stencil can achieve fourth order accuracy.

Following the approach showed in  \cite{sussman1994level,Osher,russo2000remark,book:72748}, the bubble $\mathcal{B}$ is implicitly defined by a level set function $\phi(x,y)$ that is positive inside the bubble, negative outside and zero on the boundary $\Gamma_\mathcal{B}$ :
\begin{eqnarray}
	\mathcal{B} = \{(x,y): \phi(x,y) > 0\}, \qquad
	\Gamma_\mathcal{B} = \{(x,y): \phi(x,y) = 0\}.
\end{eqnarray}
The unit normal vector $n$ in \eqref{normaleq} can be computed as $n = \frac{\nabla \phi }{|\nabla \phi|}$.
%\begin{equation}\label{LSnormal}
%	n = \frac{\nabla \phi }{|\nabla \phi|}
%\end{equation}
For a spherical bubble $\mathcal{B}$ centered at the origin, the most convenient level-set function, in terms of numerical stability, is {the} signed distance function between $(x,y)$ and $\Gamma_\mathcal{B}$, i.e.\ $\phi=R_\mathcal{B}-\sqrt{x^2+y^2}$.

After defining a uniform square Cartesian discretization, such that \\ ${\Omega}_h = (x_h,y_h) = \{(x_i,y_j)=(ih,jh), (i,j) \in \{0,\cdots,N\}^2 \}$,  with $h = \Delta x = \Delta y = Lx/(N-1), \, N\in \mathbb N$, and the set of grid points $\mathcal{S}_h$, we define the set of internal points $\Omega_h = \mathcal{S}_h \cap \Omega$, the set of bubble points $\mathcal{B}_h =\mathcal{S}_h \cap \mathcal{B}$ and the set of ghost points $\mathcal{G}_h$, which are points that belong to $\mathcal{B}$, with at least an internal point as neighbor, and are formally defined as follows
\begin{equation}
	(x_i,y_j) \in \mathcal{G}_h \iff (x_i,y_j) \in \mathcal{B}_h \text{ and } \{(x_i \pm h,y_j),(x_i,y_j\pm h) \} \cap \Omega_h \neq \emptyset.
\end{equation}
The other grid points, $\mathcal{S}_h\setminus(\Omega_h \cup \mathcal{G}_h)$, are called inactive points. See Fig.~\ref{fig_discretization} (b) for a classification of the points that belong to $S_h$.
Let $N_I = |\Omega_h|$ and $N_G = |\mathcal{G}_h|$ be the cardinality of the sets $\Omega_h$ and $\mathcal{G}_h$, respectively, and $\mathcal{N} = N_I + N_G$ the total number of active points. 
To compute the solution $ c_{\varepsilon,h} $ at the grid points of $\Omega_h \cup \mathcal{G}_h$, we employ a finite difference discretization of the equations at the $N_I$ internal grid points, along with appropriate interpolations for the $N_G$ ghost values to complete the system. These points play a crucial role in discretizing the equations for the internal points that are in the proximity to $\Gamma_\mathcal{B}$. At the end, the equations for the ghost points are coupled with the ones for the internal points, and the result is a $\mathcal{N} \times \mathcal{N}$ system, with non-eliminated boundary conditions. 
\begin{figure}[H]
	\centering
	\hfill
	\begin{minipage}[b]
		{.45\textwidth}
		\centering
		\begin{overpic}[abs,width=0.75\textwidth,unit=1mm,scale=.25]{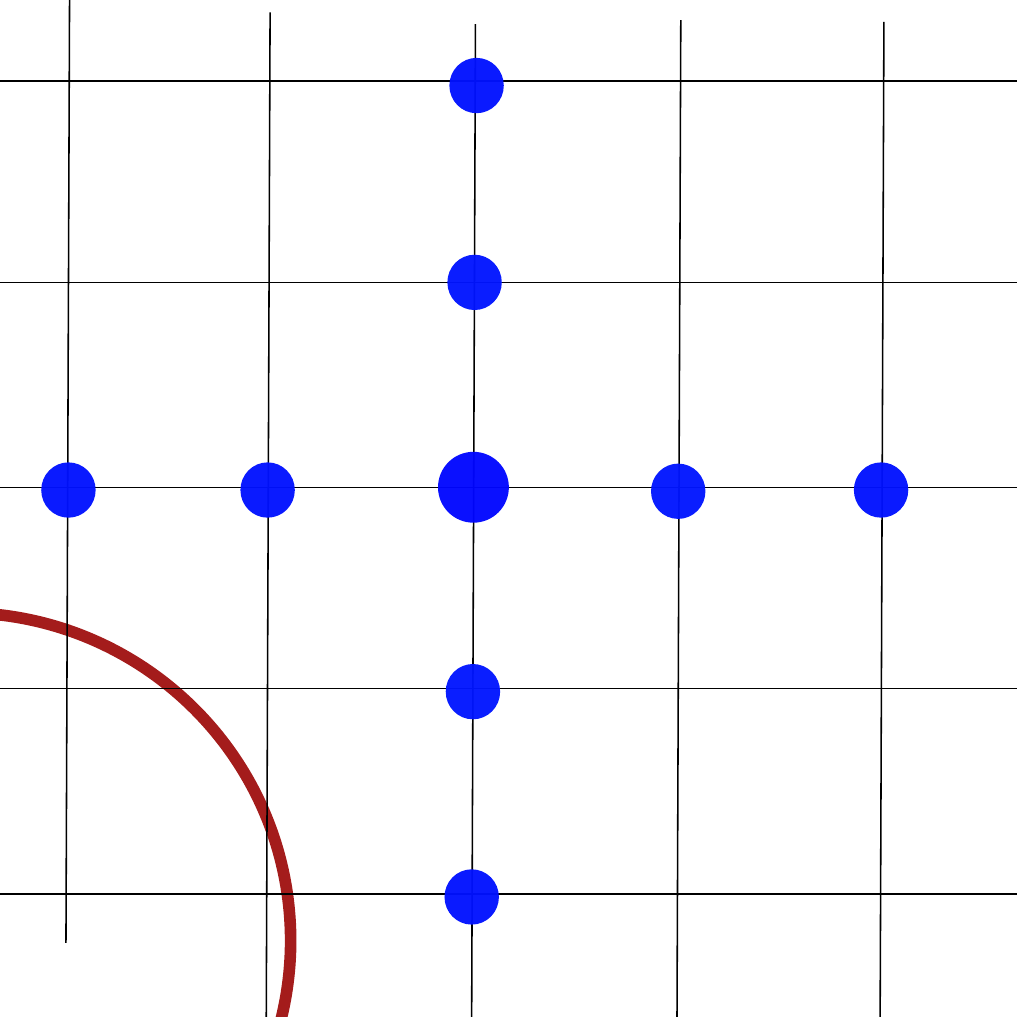}
			\put(38,40){$\Omega$}
			\put(-1,9){$\mathcal{B}$}
			\put(-9.,45){(a)}
			\put(4.,48){j+2}
			\put(4.,38){j+1}
			\put(5,29){j}
			\put(25,28.5){$P_{\rm i,j}$}
			\put(4,19){j-1}
			\put(4,8){j-2}
			\put(4,2){i-2}
			\put(14,2){i-1}
			\put(24,2){i}
			\put(34,2){i+1}
			\put(45,2){i+2}
		\end{overpic}  		%\includegraphics[width=0.75\textwidth]{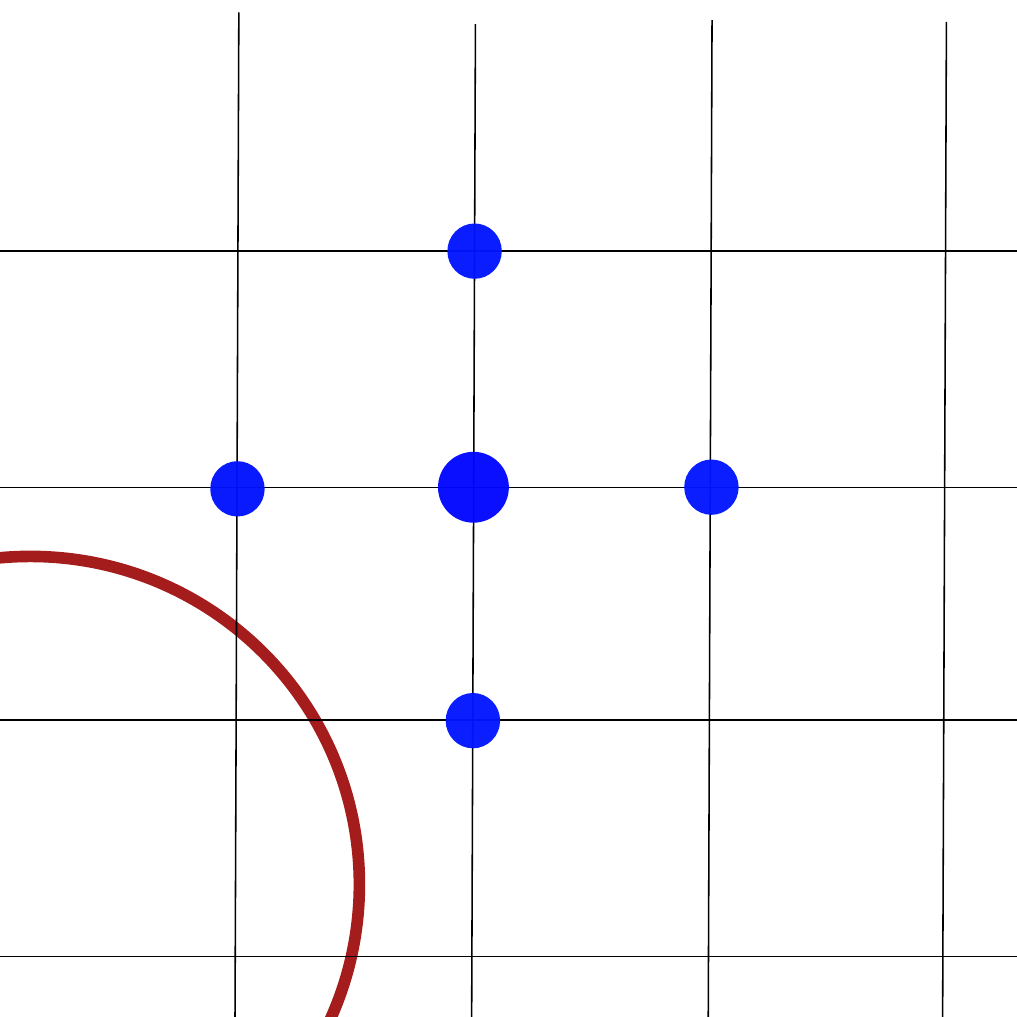}
	\end{minipage}\hfill
	\begin{minipage}[b]
		{.49\textwidth}
		\centering
		\begin{overpic}[abs,width=0.65\textwidth,unit=1mm,scale=.25]{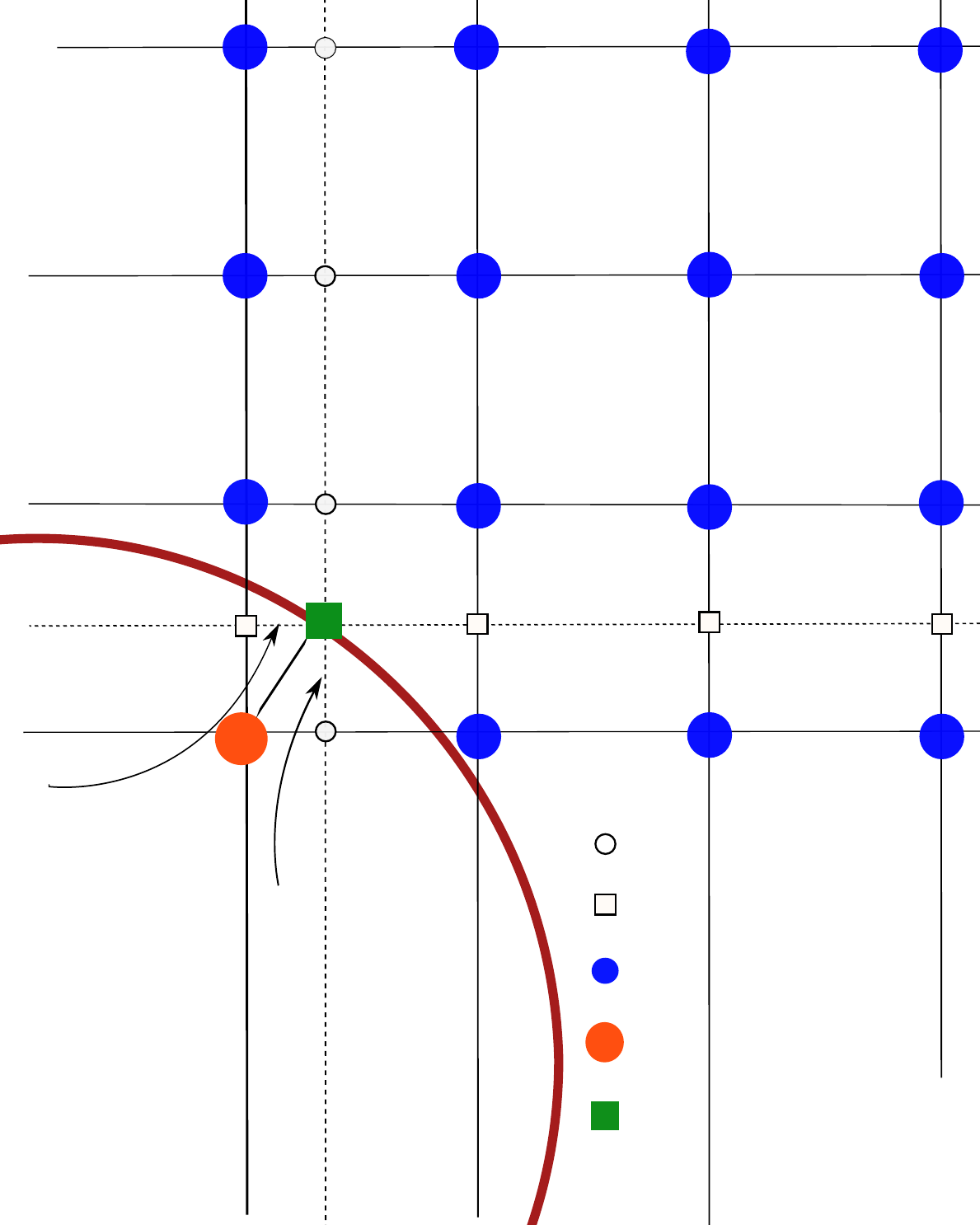}
			\put(10,12.5){\small $\vartheta_y h$}
			\put(0,17){\small $\vartheta_x h$}
			\put(-6.,45){(b)}
			\put(31,17.5){$x$--interpolation}
			\put(31,14.5){$y$--interpolation}
			\put(31,11.2){Inside points}
			\put(31,7){Ghost points}
			\put(31,4){Boundary points}
			\put(17,28.5){B}
			\put(7,18){G}
			\put(4,43){$\Omega$}
			\put(4,8){$\mathcal{B}$}
		\end{overpic}  
	\end{minipage}
	\hspace*{\fill}
	\caption{\textit{(a): 9-point stencil for the discrete operators $L_{{\rm i},h}^{\rm 4th}$ and $Q_{{\rm i},h}^{\rm 4th}$ for the internal points $P_{\rm i,j} = (x_i,y_j)$. (b): Representation of the upwind 16-points stencil associated with the ghost point $G$, boundary point $B$ and the relative outgoing normal vector $n$ to $\Gamma_\mathcal{B}$. The stencil is composed by 15 blue internal points, together with an orange ghost point G.}}
	\label{stencil}
\end{figure}
In this case, we rewrite Eq.~\eqref{eq_4th_space_variable} as follows
\begin{eqnarray}
	\label{eq_ghost_linear_system}
	\partial_t c_{\varepsilon,h} &=& \left(L^{\rm 4th}_{{\rm i},h} +  Q_{{\rm i},h}^{\rm 4th}(\vec{u}_h(t/\varepsilon)) \right)  c_{\varepsilon,h}, \quad c^0_{\varepsilon,h} = c_{\varepsilon,h}(t = 0) ,
\end{eqnarray}
where $L_{{\rm i},h}^{\rm 4th}$ and $Q_{{\rm i},h}^{\rm 4th}$ are $\mathcal{N} \times \mathcal{N}$ matrices representing the discretization of the derivative, including this time, the interpolation operators. 
%We denote by $L_h^{(i,j)}= \left(L_h^{(i,j),1},\ldots,L_h^{(i,j),N_I+N_G} \right)$ and $Q_h^{(i,j)}= \left(Q_h^{(i,j),1},\ldots,Q_h^{(i,j),N_I+N_G} \right)$ the rows of $L_h$ and $Q_h$, respectively, associated with the grid point $(x_i,y_j)$. 
If $P_{ij} = (x_i,y_j) \in \Omega_h$ is an internal grid point (as in Fig.~\ref{stencil} (a)), the expressions of $L_{{\rm i},h}^{\rm 4th}$ and $Q_{{\rm i},h}^{\rm 4th}$ coincide with ones of $L_{h}^{\rm 4th}$ and $Q_{h}^{\rm 4th}$ in Eqs.~\eqref{eq_L4th},\eqref{eq_Q4th}, respectively.  If $G=(x_i,y_j) \in \mathcal{G}_h$ is a ghost point, then we discretize the boundary condition in \eqref{eq:bc_ghost} for $\Gamma_{\mathcal{B}}$, following a ghost-point approach similar to the one proposed in \cite{COCO2013464,COCO2018299,ASTUTO2023111880,COCO202310}{, and} in \cite{cocohigh} they introduce a {higher} order discretization for the ghost points, and it is summarised as follows. We first compute the closest boundary point $B \in \Gamma_\mathcal{B}$ by
\[
B =O + R_\mathcal{B} \frac{O-G}{|O-G|},
\]
where $O$ is the center of the bubble. Then, we identify the upwind 16-point stencil starting from $G=(x_G,y_G)=(x_i,y_j)$, containing $B=(x_B,y_B)$:
\[
\left\{ (x_{i+s_x m_x},x_{j+s_y m_y}) \colon m_x,m_y=0,1,2,3 \right\},
\]
where $s_x = \text{sign} (x_B-x_G)$ and $s_y = \text{sign} (y_B-y_G)$. The solution $ c_{\varepsilon,h} $, along with its first and second derivatives, is interpolated at the boundary point $B$ using the discrete values $ c^\varepsilon_{i,j}$ on the 16-point stencil.
%%%---------------
% The interpolations can be obtained as tensor products of 1D interpolations in the axis directions. First, we interpolate in $r-$direction, moving from $G$ of a distance $\vartheta_x h$, and finding the three points in the rows $\{j, j+1, j+2\}$ (whole circle points in Fig.~\ref{stencil} (b)). Analogously, we move from $G$ in $y-$direction, of a distance $\vartheta_y h$, and we find the three points in the columns $\{i,i+1,i+2\}$. 
% \todo[inline]{add more details about interpolation}
% In detail, the 1D quadratic interpolations using the grid points $x_{i-2},x_{i-1},x_{i}$ to evaluate the function, its first derivative and the second derivative on $x_i - \vartheta h$ are given by
% \[
% \widehat{ c }(x_i + \vartheta\,h) =  \sum_{m=0}^2 \gamma_{m}(\vartheta) \,  c^\varepsilon_{i+m},
% \quad
% \widehat{ c }'(x_i + \vartheta\,h) =  \sum_{m=0}^2 \gamma'_{m}(\vartheta) \,  c^\varepsilon_{i+m},
% \quad
% \widehat{ c }''(x_i + \vartheta\,h) =  \sum_{m=0}^2 \gamma''_{m}(\vartheta)\, c^\varepsilon_{i+m},
% \]
% where
% \[
% \gamma(\vartheta) = \left( \frac{(1-\vartheta)(2-\vartheta)}{2}, \quad \vartheta (2-\vartheta), \quad \frac{\vartheta(\vartheta-1)}{2} \right)
% \]
% \[
% \gamma'(\vartheta) = \frac{1}{h} \left( \frac{(2\vartheta-3)}{2}, \quad 2(1-\vartheta), \quad \frac{(2\vartheta-1)}{2} \right)
% \]
% \[
% \gamma''(\vartheta) = \frac{1}{h^2} \left( 1, \quad -2, \quad 1 \right).
% \]
%---------------------------------
We start defining (see Fig.~\ref{stencil} (b))
\[ 
\vartheta_x =  s_x (x_B-x_G)/h, \qquad
\vartheta_y =  s_y (y_B-y_G)/h,
\]
with $0\leq \vartheta_x,\vartheta_y < 1$.
The 2D interpolation formulas are:
\begin{align*}
	\widehat{c}(B) &= \sum_{m_x,m_y=0}^3 l_{m_x}(\vartheta_x) l_{m_y}(\vartheta_y)  c _{i+s_x m_x,j+s_y m_y},
	\\
	\frac{\partial \widehat{c}}{\partial x}(B) &= s_x \sum_{m_x,m_y=0}^3 l'_{m_x}(\vartheta_x) l_{m_y}(\vartheta_y)  c _{i+s_x m_x,j+s_y m_y},
	%\\
	%\frac{\partial \widehat{c}}{\partial }(B) = s_y \sum_{m_x,m_y=0}^2 l_{m_x}(\vartheta_x) l'_{m_y}(\vartheta_y)  c _{i+s_x m_x,j+s_y m_y},
	\\
	\frac{\partial^2 \widehat{c}}{\partial x^2}(B) &= \sum_{m_x,m_y=0}^3 l''_{m_x}(\vartheta_x) l_{m_y}(\vartheta_y)  c _{i+s_x m_x,j+s_y m_y},
	%\\
	%\frac{\partial^2 \widehat{c}}{\partial y^2}(B) = \sum_{m_x,m_y=0}^2 l_{m_x}(\vartheta_x) l''_{m_y}(\vartheta_y)  c _{i+s_x m_x,j+s_y m_y},
	\\ 
	\frac{\partial^2 \widehat{c}}{\partial x \partial y}(B) &= s_x\, s_y \sum_{m_x,m_y=0}^3 l'_{m_x}(\vartheta_x) l'_{m_y}(\vartheta_y)  c _{i+s_x m_x,j+s_y m_y},
\end{align*}
where  
\begin{align*}
l(\vartheta_\alpha) & =  \left( \frac 1 2\left(1-\frac 1 2\vartheta_\alpha\right)\left(\frac 3 2\vartheta_\alpha-1\right)\left(\frac 3 2\vartheta_\alpha-2\right), \,\frac 9 4\vartheta_\alpha\left(\frac 3 2\vartheta_\alpha-2\right)\left(\frac 3 2\vartheta_\alpha-1\right), \right. \\  & \left. \, \frac 9 4\vartheta_\alpha\left(1-\frac 3 2\vartheta_\alpha\right)\left(\frac 1 2\vartheta_\alpha-1\right), \, \frac 1 4\vartheta_\alpha\left(\frac 3 2\vartheta_\alpha-1\right)\left(\frac 3 2\vartheta_\alpha-2\right) \right), \\
l'\left(\vartheta_\alpha\right) & = \frac{1}{h} \left( -\frac 1 6\left(\frac{27}{4}\vartheta_\alpha^2-18\vartheta_\alpha+11\right), \,\frac 3 2\left(\frac 9 4\vartheta_\alpha^2-5\vartheta_\alpha+2\right), \right. \\  & \left. \,-\frac 3 2\left(\frac 9 4\vartheta_\alpha^2-4\vartheta_\alpha+1\right), \,\frac 1 6\left(\frac{27}{4}\vartheta_\alpha^2-9\vartheta_\alpha+2\right) \right) \\
l''\left(\vartheta_\alpha\right) & = \frac{1}{h^2} \left( -2\left(\frac 1 2\vartheta_\alpha-1\right),3\vartheta_\alpha-5,-2\left(\frac 3 2\vartheta_\alpha-2\right),\vartheta_\alpha-1\right), \quad \alpha = x,y,
\end{align*}
%\giovanni{Secondo me manca un divisione per $h$ nella espressione della derivata prima ed una divisione per $h^2$ nella espressione della derivata seconda, poiche la derivata di $\vartheta$ rispetto a $x$ è $1/h$}
and where we omit ${\partial \widehat{c}}/{\partial y}(B)$ and ${\partial^2 \widehat{c}}/{\partial y^2}(B)$ because they are analogue to the $x-$coordinate derivatives. Finally, the rows of $L_{{\rm i},h}^{\rm 4th}$ corresponding to the ghost point $G=(x_G,y_G)$ are determined by applying the boundary condition on $\Gamma_\mathcal{B}$, i.e.
\begin{equation}\label{QHghost}
	L_{{\rm i},h}^{\rm 4th} c_{\varepsilon,h}\Big|_B = D \left. 
	\frac{\partial^2 \widehat{c}}{\partial \tau^2} 
	%\frac{\partial ^2 \widehat{ c }}{\partial \tau ^2} 
	\right|_B - 
	\frac{D}{\mathcal{M}} \left. \frac{\partial \widehat{ c }}{\partial n} \right|_B,
\end{equation}
and 
\begin{equation}
	\label{normaleq}
	\frac{\partial }{\partial \tau} = \tau_x\,\frac{\partial }{\partial x} + \tau_y\,\frac{\partial }{\partial y}, \quad
	\frac{\partial }{\partial n} = n_x\,\frac{\partial }{\partial x} + n_y\,\frac{\partial }{\partial y},
\end{equation}
% \begin{eqnarray}\label{normaleq}
% 	\frac{\partial }{\partial n} = n_x\,\frac{\partial }{\partial x} + n_y\,\frac{\partial }{\partial y}, &\qquad
% 	\displaystyle \frac{\partial ^2}{\partial \tau ^2} =
% 	\displaystyle \tau_x^2\,\frac{\partial^2 }{\partial x^2} + 2 \tau_x \tau_y\, \frac{\partial }{\partial x}\frac{\partial }{\partial y} + \tau_y^2\,\frac{\partial^2 }{\partial y^2}, %\\
% 	% \qquad
% 	%(n_x,n_y) = \frac{O-G}{|O-G|}, & \qquad (\tau_x,\tau_y)= (-n_y,n_x).
% \end{eqnarray}
with $(\tau_x,\tau_y)= (-n_y,n_x)$, $\cot \theta = n_x/n_y$. 
For a spherical bubble, \\
$(n_x,n_y) = ({O-G})/{|O-G|},\, \>\cot \theta =x/y$.
%At the end, the problem \eqref{system3D} is discretized in space, leading to a linear system
% \begin{equation}\label{eq_discr_space}
% {\partial_t} c_{\varepsilon,h} = \left(L_h + Q_h \vec{u}_h(t/\varepsilon)\right) c_{\varepsilon,h}, \qquad c_{\varepsilon,h}(t=0) = c^0_{\varepsilon,h}
% \end{equation}
% where $L_h$ and $Q_h$ are the $\mathcal{N} \times \mathcal{N}$ matrices representing the discretization of the diffusive Laplacian operator and $Q_h$ the advection term in system~\eqref{system3D}.

\subsection{Time discretization}
\label{section_time_discr}
In this section, we {develop} a scheme to solve Eq.~\eqref{eq_4th_space_variable}, that is uniformly accurate in $\varepsilon$. {The only assumption we make is} the uniform boundedness of the first time derivative of the solution with respect to $\varepsilon$. This assumption comes from the lack of oscillations in space of the initial condition { and time oscillations in the boundary conditions.}

As we mentioned in the Introduction, in this section, we start defining a numerical scheme that is first order accurate in time, and, recursively, we obtain second and third order numerical schemes. This paper is a natural continuation of a previous one \cite{astuto2023time}, where first and second order schemes are already defined and rigorously proved. Here, we recall the proof for the first order accurate numerical scheme to introduce the reader to the strategy used, that may seem overly technical.

\begin{pro} \label{pro_1st}
	Let ${L^{\rm 4th}_h}$ and ${Q^{\rm 4th}_h}$ be bounded operators in $\mathbb{R}^{\mathcal{N}}$ defined in Section~\ref{section_discr_space}, with $h>0$. Let $c_{\varepsilon,h}^0 \in \mathbb{R}^{\mathcal{N}}$ be bounded in $\varepsilon$, $\vec{u}_h \in \mathbb{R}^{\mathcal{N}}$ be a bounded given $1-$periodic function of time. Then, there exists a constant $\Delta t_0>0$ independent of $\varepsilon$ such that,  for all $\Delta t < \Delta t_0$, the following holds true:
	\begin{itemize}
		\item[i)] The operator $\mathcal{A}^1_{\Delta t} := I - \Delta t {L^{\rm 4th}_h} - {Q^{\rm 4th}_h}\int _{t^n}^{t^{n+1}}\vec{u}_h(s/\varepsilon)\,ds$ in $\mathbb{R}^\mathcal{N}$ is invertible.
		\item[ii)] The following:
		\begin{eqnarray*} 
			c_{\varepsilon,h}^0 &=& c_{\varepsilon,h}(0),  \\ 
			c_{\varepsilon,h}^{n+1} &=& {\mathcal{A}^1_{\Delta t}}^{-1}c_{\varepsilon,h}^n,
		\end{eqnarray*}
		for $n = 1,\cdots,M$ is a first order scheme, uniformly accurate with respect to $\varepsilon$, solving the Eq.~\eqref{eq_4th_space_variable}. In other words, we have $||c_{\varepsilon,h}(t^n) - c_{\varepsilon,h}^n||\leq K \Delta t$ for all $n = 1,\cdots,M$, with $K$ independent of $\varepsilon$, $\Delta t = t_{\rm fin}/M$, $t_{\rm fin}> 0$ and $t^n = n\Delta t$.    
	\end{itemize}
	The same result is valid also for the Eq.~\eqref{eq_ghost_linear_system}, where the operators are $L^{\rm 4th}_{{\rm i},h}$ and $Q^{\rm 4th}_{{\rm i},h}$, and the proof is analogue.
\end{pro}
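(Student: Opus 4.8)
\emph{Part i).} The plan is to write, for each step, $\mathcal{A}^1_{\Delta t} = I - B_n$ with
\[
B_n := \Delta t\,L^{\rm 4th}_h + \int_{t^n}^{t^{n+1}} Q^{\rm 4th}_h\big(\vec{u}_h(s/\varepsilon)\big)\,ds,
\]
where I have used the linearity of $Q^{\rm 4th}_h$ in its coefficient argument to pull it inside the integral, and then to show $\|B_n\|<1$ for $\Delta t$ small, uniformly in $\varepsilon$ and $n$. The key point is that, for every $\varepsilon$, the map $s\mapsto\vec{u}_h(s/\varepsilon)$ ranges over the same values as the fixed bounded $1$-periodic function $\vec{u}_h$, so $\sup_s\|\vec{u}_h(s/\varepsilon)\|=\|\vec{u}_h\|_\infty$ independently of $\varepsilon$; hence $\|B_n\|\le\Delta t\big(\|L^{\rm 4th}_h\|+C_Q\|\vec{u}_h\|_\infty\big)=:\Delta t\,\beta$ with $C_Q$ the operator norm of $Q^{\rm 4th}_h$. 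Taking $\Delta t_0:=1/\beta$ gives $\|B_n\|<1$, and invertibility together with the uniform bound $\|(\mathcal{A}^1_{\Delta t})^{-1}\|\le(1-\Delta t\,\beta)^{-1}$ follows from the Neumann series.

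\emph{Part ii).} First I would rewrite one step of the scheme in incremental form, $c_{\varepsilon,h}^{n+1}=c_{\varepsilon,h}^n+\Delta t\,L^{\rm 4th}_h c_{\varepsilon,h}^{n+1}+\big(\int_{t^n}^{t^{n+1}}Q^{\rm 4th}_h(\vec{u}_h(s/\varepsilon))\,ds\big)c_{\varepsilon,h}^{n+1}$, and compare it with the exact identity obtained by integrating Eq.~\eqref{eq_4th_space_variable} over $[t^n,t^{n+1}]$. Subtracting, and replacing $c_{\varepsilon,h}(s)$ by $c_{\varepsilon,h}(t^{n+1})$ in both integrals of the exact identity, the defect is the local truncation error
\[
\tau^{n+1}:=\int_{t^n}^{t^{n+1}}\big(L^{\rm 4th}_h+Q^{\rm 4th}_h(\vec{u}_h(s/\varepsilon))\big)\big(c_{\varepsilon,h}(s)-c_{\varepsilon,h}(t^{n+1})\big)\,ds.
\]
The decisive observation is that, the oscillatory coefficient having been integrated \emph{exactly}, $\tau^{n+1}$ carries no negative power of $\varepsilon$: bounding $\|c_{\varepsilon,h}(s)-c_{\varepsilon,h}(t^{n+1})\|\le\Delta t\,\sup_{[0,t_{\rm fin}]}\|\partial_t c_{\varepsilon,h}\|$ and using Part i) gives $\|\tau^{n+1}\|\le\beta\,\Delta t^2\,\sup_{[0,t_{\rm fin}]}\|\partial_t c_{\varepsilon,h}\|$. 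That supremum is $\varepsilon$-uniform because a Grönwall estimate on Eq.~\eqref{eq_4th_space_variable} yields $\|c_{\varepsilon,h}(t)\|\le\|c_{\varepsilon,h}^0\|e^{\beta t}$, whence $\partial_t c_{\varepsilon,h}=(L^{\rm 4th}_h+Q^{\rm 4th}_h(\vec{u}_h(\cdot/\varepsilon)))c_{\varepsilon,h}$ inherits an $\varepsilon$-independent bound; thus $\|\tau^{n+1}\|\le C_\tau\Delta t^2$ with $C_\tau$ independent of $\varepsilon$.

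\emph{Conclusion and transfer.} Setting $e^n:=c_{\varepsilon,h}(t^n)-c_{\varepsilon,h}^n$, the two displays above give $\mathcal{A}^1_{\Delta t}e^{n+1}=e^n+\tau^{n+1}$, so by Part i) $\|e^{n+1}\|\le(1-\Delta t\,\beta)^{-1}\big(\|e^n\|+C_\tau\Delta t^2\big)$. Since $c_{\varepsilon,h}^0=c_{\varepsilon,h}(0)$ we have $e^0=0$, and a discrete Grönwall argument (using $(1-\Delta t\,\beta)^{-1}\le e^{2\Delta t\,\beta}$ for $\Delta t\le 1/(2\beta)$ and summing over $n\le M=t_{\rm fin}/\Delta t$) yields $\|e^n\|\le C_\tau\,t_{\rm fin}\,e^{2\beta t_{\rm fin}}\,\Delta t=:K\Delta t$ with $K$ independent of $\varepsilon$, which is the claimed first-order uniform accuracy. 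Since the argument used only the boundedness of the operators and of $\vec{u}_h$, the Grönwall stability of the semi-discrete system, and the exact integration of the oscillatory term — none of which is specific to $L^{\rm 4th}_h,Q^{\rm 4th}_h$ as opposed to $L^{\rm 4th}_{{\rm i},h},Q^{\rm 4th}_{{\rm i},h}$ — the same proof applies verbatim to Eq.~\eqref{eq_ghost_linear_system}. The only genuinely delicate point is the $\varepsilon$-uniform bound on $\tau^{n+1}$: it hinges entirely on the coefficient $\vec{u}_h(s/\varepsilon)$ being integrated exactly on each subinterval (so that the forbidden terms $\Delta t^p/\varepsilon^q$ never arise) and on the $\varepsilon$-uniform bound for $\partial_t c_{\varepsilon,h}$, which is automatic here but becomes the structural assumption that must be imposed by hand when the same recursive strategy is pushed to the second- and third-order schemes.
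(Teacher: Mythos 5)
Your proposal is correct and follows essentially the same route as the paper's own argument (given in \cite[Proposition 1]{astuto2023time}): exact integration of the oscillatory coefficient over each step, an $\varepsilon$-uniform bound on $\partial_t c_{\varepsilon,h}$ from the boundedness of the operators and of $\vec{u}_h$, a local truncation error of size $\Delta t^2$, and a discrete Gr\"onwall recursion of the form $\|e^{n+1}\|\leq(1-\beta\Delta t)^{-1}(\|e^n\|+C_\tau\Delta t^2)$. Your Neumann-series treatment of part i) is just a slightly more quantitative version of the paper's ``$\mathcal{A}^1_{\Delta t}\to I$ as $\Delta t\to 0$'' argument, so no substantive difference.
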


{We omit the proof of the first-order scheme, as it is available in \cite[Proposition 1]{astuto2023time}.}

\begin{pro} \label{pro_2nd}
	Let ${L^{\rm 4th}_h}$ and ${Q^{\rm 4th}_h}$ be finite dimensional bounded operators in $\mathbb{R}^{\mathcal{N}}$ defined in Section~\ref{section_discr_space}, with $h>0$. Let $c_{\varepsilon,h}^0 \in \mathbb{R}^{\mathcal{N}}$ be bounded in $\varepsilon$, and let $\vec{u}_h \in \mathbb{R}^{\mathcal{N}}$ be a bounded given $1-$periodic function in time. Then, there exists a constant $\Delta t_0>0$ independent of $\varepsilon$, such that for all $\Delta t < \Delta t_0$, the following holds true:
	\begin{itemize}
		\item[i)] The operator $\mathcal{A}^2_{\Delta t} = \mathcal{A}^1_{\Delta t} - \mathbb{M}_2$ in $\mathbb{R}^{\mathcal{N}}$ is invertible, where
		\begin{align} \nonumber & \mathbb{M}_2 = - \frac{1}{2}\left(L^{\rm 4th}_h\right)^2\Delta t^2 - {L^{\rm 4th}_h}{Q^{\rm 4th}_h}\int  _{t^n}^{t^{n+1}} \int_{s}^{t^{n+1}}\vec{u}_h(\sigma/\varepsilon)\,d\sigma ds \\ \nonumber &   - {Q^{\rm 4th}_h}{L^{\rm 4th}_h}\int_{t^n}^{t^{n+1}}(t^{n+1} - s)\vec{u}_h(s/\varepsilon)\,ds \\ \nonumber & - \left(Q^{\rm 4th}_h\right)^2 \int_{t^n}^{t^{n+1}}\vec{u}_h(s/\varepsilon)\int_{s}^{t^{n+1}}\vec{u}_h(\sigma/\varepsilon)\,d\sigma\,ds, 
		\end{align}    
		\item[ii)] The following scheme:
  \begin{subequations}
\label{eq_scheme_2nd}		
\begin{eqnarray} \label{eq_scheme_2nd_1}
			c_{\varepsilon,h}^0 &=& c_{\varepsilon,h}(0),  \\ \label{eq_scheme_2nd_2}
			c_{\varepsilon,h}^{n+1} &=& {\mathcal{A}^2_{\Delta t}}^{-1}c_{\varepsilon,h}^n, \qquad {\rm for } \quad n = 1,\cdots,M
		\end{eqnarray}
  \end{subequations}
		is a second order scheme, uniformly accurate with respect to $\varepsilon$, solving the Eq.~\eqref{eq_4th_space_variable}. In other words we have $||c_h(t^n) - c_h^n||\leq K \Delta t^2$,  for all $n = 1,\cdots,M$, with $K$ independent of $\varepsilon$, $t^n = n\Delta t$ and $\Delta t = t_{\rm fin}/M$.  
	\end{itemize}
\end{pro}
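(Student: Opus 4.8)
The plan is to follow closely the (recalled) argument for Proposition~\ref{pro_1st}, promoting the Duhamel/Picard iteration from one step to two. For part i), I would note that $\mathbb{M}_2$ collects only terms that are quadratic in $\Delta t$: the $\bigl(L^{\rm 4th}_h\bigr)^2\Delta t^2$ term is manifestly $O(\Delta t^2)$, and each iterated integral of $\vec u_h(\cdot/\varepsilon)$ over $[t^n,t^{n+1}]^2$ (or the triangle $\{t^n\le s\le\sigma\le t^{n+1}\}$, or with a factor $t^{n+1}-s$) is bounded in norm by a constant times $\|\vec u_h\|_\infty^2\,\Delta t^2$, all bounds being independent of $\varepsilon$ because the integrals are evaluated exactly. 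Hence $\mathcal A^2_{\Delta t}=\mathcal A^1_{\Delta t}-\mathbb{M}_2=I-\bigl(\Delta t L^{\rm 4th}_h+Q^{\rm 4th}_h\int_{t^n}^{t^{n+1}}\vec u_h(s/\varepsilon)\,ds\bigr)-\mathbb{M}_2\to I$ as $\Delta t\to0$, so $\det\mathcal A^2_{\Delta t}\neq0$ and $\mathcal A^2_{\Delta t}$ is invertible for all $\Delta t$ below some $\Delta t_0>0$ independent of $\varepsilon$, with $\|(\mathcal A^2_{\Delta t})^{-1}\|\le(1-C\Delta t)^{-1}$ for a constant $C$ independent of $\varepsilon$.

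For part ii), I would first derive the scheme. Integrating Eq.~\eqref{eq_4th_space_variable} on $[t^n,t^{n+1}]$ gives the exact identity
\begin{equation*}
c_{\varepsilon,h}(t^{n+1})-c_{\varepsilon,h}(t^n)=\int_{t^n}^{t^{n+1}}\!\!\bigl(L^{\rm 4th}_h+Q^{\rm 4th}_h\vec u_h(s/\varepsilon)\bigr)c_{\varepsilon,h}(s)\,ds,
\end{equation*}
and substituting into the integrand the same identity written from $s$ to $t^{n+1}$ for $c_{\varepsilon,h}(s)$ produces, still exactly,
\begin{equation*}
c_{\varepsilon,h}(t^{n+1})-c_{\varepsilon,h}(t^n)=\Bigl(\int_{t^n}^{t^{n+1}}\!\!\bigl(L^{\rm 4th}_h+Q^{\rm 4th}_h\vec u_h(s/\varepsilon)\bigr)\,ds\Bigr)c_{\varepsilon,h}(t^{n+1})-\mathcal I_n,
\end{equation*}
with $\mathcal I_n=\int_{t^n}^{t^{n+1}}\!\!\int_s^{t^{n+1}}\bigl(L^{\rm 4th}_h+Q^{\rm 4th}_h\vec u_h(s/\varepsilon)\bigr)\bigl(L^{\rm 4th}_h+Q^{\rm 4th}_h\vec u_h(\sigma/\varepsilon)\bigr)c_{\varepsilon,h}(\sigma)\,d\sigma\,ds$. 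Expanding the operator product into its four pieces ($L^2$, $LQ$, $QL$, $Q^2$) and freezing $c_{\varepsilon,h}(\sigma)$ at $c_{\varepsilon,h}(t^{n+1})$ turns $-\mathcal I_n$ into exactly $\mathbb{M}_2\,c_{\varepsilon,h}(t^{n+1})$ plus a remainder $R_n:=\int\!\!\int\bigl(\cdots\bigr)\bigl(\cdots\bigr)\bigl(c_{\varepsilon,h}(t^{n+1})-c_{\varepsilon,h}(\sigma)\bigr)\,d\sigma\,ds$; the four iterated integrals match term by term those in the definition of $\mathbb{M}_2$, and this identification is the bookkeeping core of the proof. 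Using $\Delta t L^{\rm 4th}_h+Q^{\rm 4th}_h\int_{t^n}^{t^{n+1}}\vec u_h(s/\varepsilon)\,ds=I-\mathcal A^1_{\Delta t}$ and $\mathbb{M}_2=\mathcal A^1_{\Delta t}-\mathcal A^2_{\Delta t}$, this reads $\mathcal A^2_{\Delta t}c_{\varepsilon,h}(t^{n+1})=c_{\varepsilon,h}(t^n)+R_n$, which justifies the scheme~\eqref{eq_scheme_2nd}.

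Then comes the error estimate. As in the first-order proof, boundedness of $c^0_{\varepsilon,h}$ and of $\vec u_h$, together with a Gronwall argument on the integral form, give $\|c_{\varepsilon,h}(t)\|\le K_0$ on $[0,t_{\rm fin}]$ uniformly in $\varepsilon$, hence $\|\partial_t c_{\varepsilon,h}(t)\|=\|(L^{\rm 4th}_h+Q^{\rm 4th}_h\vec u_h(t/\varepsilon))c_{\varepsilon,h}(t)\|\le K_1$ uniformly in $\varepsilon$; crucially this first-order bound is the \emph{only} regularity used, since I deliberately avoid any Taylor expansion in time (where $\partial_t^2 c_{\varepsilon,h}$ would be $O(1/\varepsilon)$). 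Consequently $\|c_{\varepsilon,h}(t^{n+1})-c_{\varepsilon,h}(\sigma)\|\le K_1\Delta t$ for $\sigma\in[t^n,t^{n+1}]$, and since the integration region has measure $\Delta t^2/2$ and the operator product is bounded independently of $\varepsilon$, $\|R_n\|\le\tilde K\Delta t^3$ with $\tilde K$ independent of $\varepsilon$. Setting $e_n=c_{\varepsilon,h}(t^n)-c_{\varepsilon,h}^n$ and subtracting \eqref{eq_scheme_2nd_2} from $\mathcal A^2_{\Delta t}c_{\varepsilon,h}(t^{n+1})=c_{\varepsilon,h}(t^n)+R_n$ gives $\mathcal A^2_{\Delta t}e_{n+1}=e_n+R_n$, so with $E_n=\|e_n\|$ and $\|(\mathcal A^2_{\Delta t})^{-1}\|\le(1-C\Delta t)^{-1}\le1+2C\Delta t$ for $\Delta t$ small,
\begin{equation*}
E_{n+1}\le(1+2C\Delta t)\bigl(E_n+\tilde K\Delta t^3\bigr).
\end{equation*}
Iterating from $E_0=0$ over $n\le M=t_{\rm fin}/\Delta t$ yields $E_n\le M(1+2C\Delta t)^M\tilde K\Delta t^3\le t_{\rm fin}e^{2Ct_{\rm fin}}\tilde K\,\Delta t^2=K\Delta t^2$, which is the claim (with $\Delta t_0$ taken to be the minimum of the threshold from part i) and $1/(2C)$). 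I expect the main obstacle to be organizational rather than deep: carefully matching the four operator-product terms of $-\mathcal I_n$ with the four iterated integrals defining $\mathbb{M}_2$ — getting every order of integration and every $(t^{n+1}-s)$ weight right — and making transparent that every $\varepsilon$-dependent quantity enters only through integrals of $\vec u_h(\cdot/\varepsilon)$ that are computed analytically and through the $\varepsilon$-uniform bound on $\partial_t c_{\varepsilon,h}$, so that no negative power of $\varepsilon$ is ever generated.
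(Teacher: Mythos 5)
Your proposal is correct and follows essentially the same strategy the paper relies on (and spells out for Proposition~\ref{pro_3rd}): a recursive Duhamel substitution, freezing the solution at $t^{n+1}$ so that the iterated integrals of $\vec u_h(\cdot/\varepsilon)$ reproduce $\mathbb{M}_2$ exactly, a remainder bounded by $O(\Delta t^3)$ using only the $\varepsilon$-uniform bound on $\partial_t c_{\varepsilon,h}$, and a discrete Gronwall recursion. Your formulation $\mathcal{A}^2_{\Delta t}e_{n+1}=e_n+R_n$ with the Neumann-series bound on $\|(\mathcal{A}^2_{\Delta t})^{-1}\|$ is just a slightly more compact packaging of the paper's error recursion, not a different argument.
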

{We skip the proof of the second order scheme because it can be found in \cite[Proposition 2]{astuto2023time}.}

\begin{pro} \label{pro_3rd}
	Let ${L^{\rm 4th}_h}$ and ${Q^{\rm 4th}_h}$ be finite dimensional bounded operators in $\mathbb{R}^{\mathcal{N}}$ defined in Section~\ref{section_discr_space}, with $h>0$. Let $c_{\varepsilon,h}^0 \in \mathbb{R}^{\mathcal{N}}$ be bounded in $\varepsilon$, and let $\vec{u}_h \in \mathbb{R}^{\mathcal{N}}$ be a bounded given $1-$periodic function in time. Then, there exists a constant $\Delta t_0>0$ independent of $\varepsilon$, such that for all $\Delta t < \Delta t_0$, the following holds true:
	\begin{itemize}
		\item[i)] The operator $\mathcal{A}^3_{\Delta t} = \mathcal{A}^2_{\Delta t} - \mathbb{M}_3$ in $\mathbb{R}^{\mathcal{N}}$ is invertible, where
		\begin{align} \nonumber \mathbb{M}_3 & = \frac{1}{2}\left(L^{\rm 4th}_h\right)^3\int_{t^n}^{t^{n+1}}(t^{n+1}-s)^2\,ds \\ \nonumber &  + \left(L^{\rm 4th}_h\right)^2{Q^{\rm 4th}_h} \int_{t^n}^{t^{n+1}}\int_{s}^{t^{n+1}}\int_{\sigma}^{t^{n+1}}\vec{u}_h(\rho/\varepsilon)d\rho\, d\sigma
			\\ \nonumber & + {L^{\rm 4th}_h} {Q^{\rm 4th}_h} {L^{\rm 4th}_h} \int_{t^n}^{t^{n+1}}\int_{s}^{t^{n+1}} (t^{n+1}-\sigma)\vec{u}_h(\sigma/\varepsilon) d\sigma ds 
			\\\nonumber & + {L^{\rm 4th}_h} \left(Q^{\rm 4th}_h\right)^2 \int_{t^n}^{t^{n+1}} \int_{s}^{t^{n+1}} \vec{u}_h(\sigma/\varepsilon) \int_{\sigma}^{t^{n+1}}\vec{u}_h(\rho/\varepsilon) d\rho\, d\sigma\,ds   \\ \nonumber & + \frac{1}{2} {Q^{\rm 4th}_h} \left(L^{\rm 4th}_h\right)^2 \int_{t^n}^{t^{n+1}} (t^{n+1}-s)^2\vec{u}_h(s/\varepsilon)ds
			\\ \nonumber &  + {Q^{\rm 4th}_h} {L^{\rm 4th}_h} {Q^{\rm 4th}_h} \int_{t^n}^{t^{n+1}} \vec{u}_h(s/\varepsilon)\int_{s}^{t^{n+1}}\int_{\sigma}^{t^{n+1}}\vec{u}_h(\rho/\varepsilon) d\rho\, d\sigma\,ds  \\ \nonumber & + \left(Q^{\rm 4th}_h\right)^2 {L^{\rm 4th}_h} \int_{t^n}^{t^{n+1}}\vec{u}_h(s/\varepsilon) \int_{s}^{t^{n+1}}(t^{n+1}-\sigma )\vec{u}_h(\sigma/\varepsilon) d\sigma \,ds  \\ \nonumber &
			+ \left(Q^{\rm 4th}_h\right)^3 \int_{t^n}^{t^{n+1}} \vec{u}_h(s/\varepsilon) \int_{s}^{t^{n+1}} \vec{u}_h(\sigma/\varepsilon) \int_{\sigma}^{t^{n+1}} \vec{u}_h(\rho/\varepsilon) d\rho\, d\sigma\,ds
		\end{align}    
		\item[ii)] The following scheme:
  \begin{subequations}
  \label{eq_scheme_3rd}
		\begin{eqnarray} \label{eq_scheme_3rd_1}
			c_{\varepsilon,h}^0 &=& c_{\varepsilon,h}(0),  \\ \label{eq_scheme_3rd_2}
			c_{\varepsilon,h}^{n+1} &=& {\mathcal{A}^3_{\Delta t}}^{-1}c_{\varepsilon,h}^n, \qquad {\rm for } \quad n = 1,\cdots,M
		\end{eqnarray}
\end{subequations}
		is a third order scheme, uniformly accurate with respect to $\varepsilon$, solving the Eq.~\eqref{eq_4th_space_variable}. In other words, we have $||c_h(t^n) - c_h^n||\leq K \Delta t^3$,  for all $n = 1,\cdots,M$, with $K$ independent of $\varepsilon$, $t^n = n\Delta t$ and $\Delta t = t_{\rm fin}/M$.  
	\end{itemize}
	The same result is valid also for the Eq.~\eqref{eq_ghost_linear_system}, where the operators are $L^{\rm 4th}_{{\rm i},h}$ and $Q^{\rm 4th}_{{\rm i},h}$, and the proof is analogue. {We will follow the iterative strategy introduced in \cite[Propositions 1,2]{astuto2023time}.}
\end{pro}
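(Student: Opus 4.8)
The plan is to mimic the two-step recursive construction of \cite{astuto2023time} used for Propositions~\ref{pro_1st} and \ref{pro_2nd}, pushing the expansion one order further. First I would establish part i). Exactly as in the first-order case, $\mathcal{A}^3_{\Delta t} = I - \Delta t L^{\rm 4th}_h - Q^{\rm 4th}_h\int\vec u_h - \mathbb{M}_2 - \mathbb{M}_3$, and since $L^{\rm 4th}_h$, $Q^{\rm 4th}_h$ are bounded operators on the finite-dimensional space $\mathbb{R}^{\mathcal{N}}$ and $\vec u_h$ is bounded, every iterated time integral appearing in $\mathbb{M}_2$ and $\mathbb{M}_3$ is $O(\Delta t^k)$ with $k\ge 2$ \emph{uniformly in $\varepsilon$} (this uniformity is the whole point: the integrals are evaluated exactly, so no negative power of $\varepsilon$ is created). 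Hence $\mathcal{A}^3_{\Delta t}\to I$ as $\Delta t\to 0$, so $\det \mathcal{A}^3_{\Delta t}\to 1$ and there is $\widehat K>0$, independent of $\varepsilon$, with $\mathcal{A}^3_{\Delta t}$ invertible for all $\Delta t<\widehat K$.

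For part ii) I would first \emph{derive} the scheme by Picard-type iteration of the integral form of \eqref{eq_4th_space_variable}. Write $\mathcal{L}(s) := L^{\rm 4th}_h + Q^{\rm 4th}_h \vec u_h(s/\varepsilon)$, so that $c_{\varepsilon,h}(t^{n+1}) = c_{\varepsilon,h}(t^n) + \int_{t^n}^{t^{n+1}}\mathcal{L}(s)\,c_{\varepsilon,h}(s)\,ds$. Substituting this identity into itself repeatedly (replacing $c_{\varepsilon,h}(s)$ under the integral by $c_{\varepsilon,h}(t^{n+1})$ plus a correction that is itself an integral of $\mathcal{L}\,c_{\varepsilon,h}$) produces, after three substitutions, a representation
\[
c_{\varepsilon,h}(t^{n+1}) - c_{\varepsilon,h}(t^n) = \Big(\textstyle\int_{t^n}^{t^{n+1}}\mathcal{L}(s)\,ds + \mathbb{M}_2 + \mathbb{M}_3\Big) c_{\varepsilon,h}(t^{n+1}) + R_n,
\]
where expanding $\mathcal{L}(s) = L^{\rm 4th}_h + Q^{\rm 4th}_h\vec u_h(s/\varepsilon)$ and collecting the $2^k$ terms at each level reproduces exactly the eight operator-weighted iterated integrals listed in $\mathbb{M}_3$ (the $\tfrac12$ factors coming from $\int_{t^n}^{t^{n+1}}(t^{n+1}-s)^2\,ds$-type integrals versus the nested triple integrals in $\vec u_h$), and $R_n$ is the fourth-level remainder, a quadruple iterated integral of $\mathcal{L}^4\,c_{\varepsilon,h}$. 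Dropping $R_n$ and replacing exact values by numerical ones gives precisely $\mathcal{A}^3_{\Delta t} c_{\varepsilon,h}^{n+1} = c_{\varepsilon,h}^n$. The key structural check here is that each substitution uses the identity $c_{\varepsilon,h}(s)-c_{\varepsilon,h}(t^{n+1}) = -\int_s^{t^{n+1}}\mathcal{L}(\sigma)c_{\varepsilon,h}(\sigma)\,d\sigma$, so the nesting of the integration limits ($t^n\le s\le\sigma\le\rho\le t^{n+1}$) matches the limits written in $\mathbb{M}_2$, $\mathbb{M}_3$.

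For the error estimate, set $e_n = c_{\varepsilon,h}(t^n)-c_{\varepsilon,h}^n$. Subtracting the scheme from the exact identity above yields $e_{n+1} = e_n + (\int\mathcal{L}+\mathbb{M}_2+\mathbb{M}_3)e_{n+1} + R_n$, hence
\[
\|e_{n+1}\| \le \|e_n\| + C\,\Delta t\,\|e_{n+1}\| + \|R_n\|,
\]
with $C$ depending only on $\|L^{\rm 4th}_h\|$, $\|Q^{\rm 4th}_h\|$, $\|\vec u_h\|_\infty$. The crucial point is $\|R_n\| \le K\,\Delta t^4$ uniformly in $\varepsilon$: $R_n$ is a fourfold iterated integral over a simplex of side $\Delta t$ of the bounded quantity $\mathcal{L}(\cdot)^{\text{(products)}}\,c_{\varepsilon,h}(\cdot)$, using that $c_{\varepsilon,h}$ — and, by the equation, its time derivative — is bounded uniformly in $\varepsilon$ (the standing assumption recalled before Proposition~\ref{pro_1st}), and that the iterated $\vec u_h$-integrals are bounded by $\|\vec u_h\|_\infty^3$ times the simplex volume $\Delta t^4/4!$, again \emph{without} any $1/\varepsilon$ factor because they are never differentiated. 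Then, for $\Delta t < 1/(2C)$, absorb the $\|e_{n+1}\|$ term on the left, add $\Delta t^3$ to both sides, define $E_n=\|e_n\|$, and obtain the recursion $E_{n+1}+\Delta t^3 \le (1-C'\Delta t)^{-1}(E_n+\Delta t^3)$; iterating from $E_0=0$ and using $(1-C' t_{\rm fin}/M)^{-M}\to e^{C' t_{\rm fin}}$ gives $E_n \le K\Delta t^3$ with $K$ independent of $\varepsilon$ and of $M$. Finally $\Delta t_0 := \min\{\widehat K, 1/(2C)\}$. The same argument applies verbatim with $L^{\rm 4th}_{{\rm i},h}$, $Q^{\rm 4th}_{{\rm i},h}$ in place of $L^{\rm 4th}_h$, $Q^{\rm 4th}_h$, since only their boundedness was used.

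The main obstacle I anticipate is purely bookkeeping: verifying that the threefold self-substitution of the integral equation, after expanding $\mathcal{L}=L^{\rm 4th}_h+Q^{\rm 4th}_h\vec u_h$, yields \emph{exactly} the eight terms of $\mathbb{M}_3$ with the stated integration limits and the correct $\tfrac12$ coefficients — in particular keeping track of which slots carry the operator $L^{\rm 4th}_h$ (contributing a polynomial weight $(t^{n+1}-\cdot)$ in time) versus $Q^{\rm 4th}_h$ (contributing a factor $\vec u_h(\cdot/\varepsilon)$ under a further integral), since $L^{\rm 4th}_h$ and $Q^{\rm 4th}_h$ do not commute and all orderings $LLL, LLQ, LQL, LQQ, QLL, QLQ, QQL, QQQ$ appear with their own nested-integral structure. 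The analytic content — boundedness of operators, uniform-in-$\varepsilon$ control of iterated $\vec u_h$-integrals, and the discrete Grönwall step — is identical to the first- and second-order proofs and presents no new difficulty.
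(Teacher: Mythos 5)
Your proposal follows essentially the same route as the paper's proof: invertibility of $\mathcal{A}^3_{\Delta t}$ from its convergence to the identity as $\Delta t \to 0$, derivation of the scheme by three successive self-substitutions of the integral (Duhamel/Picard) form of Eq.~\eqref{eq_4th_space_variable} with the $\vec u_h$-integrals computed exactly, and an error recursion in which the remainder is bounded by $C\Delta t^4$ uniformly in $\varepsilon$ and then handled by a discrete Gr\"onwall-type iteration. The only differences are cosmetic (e.g.\ you add $\Delta t^3$ to both sides where the paper adds $K_3^2\Delta t^3/(1+K_3\Delta t+K_3^2\Delta t^2)$), so the proposal is correct.
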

\begin{proof}
	To prove i), we prove that $\mathcal{A}^3_{\Delta t}$ is invertible when $\Delta t \to 0$. We start considering the norm of the operator $\mathcal{A}^3_{\Delta t}${. Since} the operators ${L^{\rm 4th}_h}$ and ${Q^{\rm 4th}_h}$ are bounded in $\mathbb{R}^{\mathcal{N}}$, and the vector-valued function $\vec{u}_h$ is also bounded, it follows that
	$||\mathcal{A}^3_{\Delta t}||\to ||I||$ when $\Delta t \to 0$. {Thus, we say} that there exists a constant $\widehat K > 0$, such that, $\forall \Delta t < \widehat K$, the operator $\mathcal{A}^3_{\Delta t}$ is invertible.
	
	To prove ii), we first show how to deduce the scheme defined in {Eqs.~\eqref{eq_scheme_3rd}}. {We} start integrating  Eq.~\eqref{eq_4th_space_variable} between $t$ and $t^{n+1}$, with $[t,t^{n+1}] \subset [t^n, t_{\rm fin}]$ 
	\begin{eqnarray}
		c_{\varepsilon,h}(t^{n+1}) -  c_{\varepsilon,h}(t) &=& \int_{t}^{t^{n+1}}\left({L^{\rm 4th}_h} + {Q^{\rm 4th}_h} \vec{u}_h(s/\varepsilon)\right) c_{\varepsilon,h}(s) ds.
		\label{eq_integral_form_3rd}
	\end{eqnarray} 
	that can be rewritten as
	\begin{eqnarray}
		\label{eq_integral_form_s}
		c_{\varepsilon,h}(s) = c_{\varepsilon,h}(t^{n+1}) - 
		\int_{s}^{t^{n+1}}\left({L^{\rm 4th}_h} + {Q^{\rm 4th}_h} \vec{u}_h(\sigma/\varepsilon)\right) c_{\varepsilon,h}(\sigma) d\sigma.
	\end{eqnarray}
	%where we remind that the choice of integrating between $t$ and $t^{n+1}$, with $t<t^{n+1}$, is our strategy to obtain a negative sign in front of the operator $\left(L^{\rm 4th}_h\right)^2$ in Eq.~\eqref{eq_2nd_order_method} 
	%\todo[inline]{write few words} (see Remark for more details).
	
	Now we substitute in Eq.~\eqref{eq_integral_form_3rd} the expression for $c_{\varepsilon,h}(s)$ in Eq.~\eqref{eq_integral_form_s}, as follows
	\begin{eqnarray} \nonumber
		c_{\varepsilon,h}(t^{n+1}) -  c_{\varepsilon,h}(t) &=& \int_{t}^{t^{n+1}}\left({L^{\rm 4th}_h} + {Q^{\rm 4th}_h} \vec{u}_h(s/\varepsilon)\right) c_{\varepsilon,h}(s) ds\\ \label{3rd_order_def_sigma}
		&=& \int_{t}^{t^{n+1}}\left({L^{\rm 4th}_h} + {Q^{\rm 4th}_h} \vec{u}_h(s/\varepsilon)\right)\,ds\, c_{\varepsilon,h}(t^{n+1})  \\ 
		&& - \int_{t}^{t^{n+1}}\left({L^{\rm 4th}_h} + {Q^{\rm 4th}_h} \vec{u}_h(s/\varepsilon)\right)\int_{s}^{t^{n+1}}\left({L^{\rm 4th}_h} + {Q^{\rm 4th}_h} \vec{u}_h(\sigma/\varepsilon)\right) c_{\varepsilon,h}(\sigma)d\sigma\,ds. \nonumber
	\end{eqnarray} 
	Since the high order in this numerical scheme is achieved recursively, we write the same expression in Eq.~\eqref{eq_integral_form_s} for $c_{\varepsilon,h}(\sigma)$,
	\begin{eqnarray}
		\label{3rd_order_def}
		c_{\varepsilon,h}(\sigma) = c_{\varepsilon,h}(t^{n+1}) - 
		\int_{\sigma}^{t^{n+1}}\left({L^{\rm 4th}_h} + {Q^{\rm 4th}_h} \vec{u}_h(\rho/\varepsilon)\right) c_{\varepsilon,h}(\rho) d\rho,
	\end{eqnarray}
	and, analogously, we substitute in Eq.~\eqref{3rd_order_def_sigma} the expression of $c_{\varepsilon,h}(\sigma)$ in Eq.~\eqref{3rd_order_def}, obtaining
	\begin{eqnarray} \label{3rd_order_def_sigma_2}
		c_{\varepsilon,h}(t^{n+1}) -  c_{\varepsilon,h}(t) %&& = %\int_{t}^{t^{n+1}}\left({L^{\rm 4th}_h} + {Q^{\rm 4th}_h} \vec{u}_h(s/\varepsilon)\right) c_{\varepsilon,h}(s) ds
		&& = \int_{t}^{t^{n+1}}\left({L^{\rm 4th}_h} + {Q^{\rm 4th}_h} \vec{u}_h(s/\varepsilon)\right) \,ds\, c_{\varepsilon,h}(t^{n+1})\\ && \nonumber - \int_{t}^{t^{n+1}}\left({L^{\rm 4th}_h} + {Q^{\rm 4th}_h} \vec{u}_h(s/\varepsilon)\right)\int_{s}^{t^{n+1}}\left({L^{\rm 4th}_h} + {Q^{\rm 4th}_h} \vec{u}_h(\sigma/\varepsilon)\right)d\sigma\,ds\,c_{\varepsilon,h}(t^{n+1}) \\ && \nonumber + \int_{t}^{t^{n+1}}\left({L^{\rm 4th}_h} + {Q^{\rm 4th}_h} \vec{u}_h(s/\varepsilon)\right)\int_{s}^{t^{n+1}}\left({L^{\rm 4th}_h} + {Q^{\rm 4th}_h} \vec{u}_h(\sigma/\varepsilon)\right) \\ && \nonumber \cdot\int_{\sigma}^{t^{n+1}}\left({L^{\rm 4th}_h} + {Q^{\rm 4th}_h} \vec{u}_h(\rho/\varepsilon)\right) c_{\varepsilon,h}(\rho)d\rho\,d\sigma\,ds.
		\nonumber 
	\end{eqnarray} 
	
	{Now}, we evaluate the quantity $ c_{\varepsilon,h}(t)$ in $t=t^n$, {such that $\Delta t = t^{n+1} - t^n < \widehat K$,} and approximate the quantities in the following way
	\begin{eqnarray}  \label{3rd_order_appr_sigma}
		 c_{\varepsilon,h}^{n+1} -  c_{\varepsilon,h}^n = && \int_{t^n}^{t^{n+1}}\left({L^{\rm 4th}_h} + {Q^{\rm 4th}_h} \vec{u}_h(s/\varepsilon)\right) \,ds\, c_{\varepsilon,h}^{n+1} \\ &&  \notag - \int_{t^n}^{t^{n+1}}\left({L^{\rm 4th}_h} + {Q^{\rm 4th}_h} \vec{u}_h(s/\varepsilon)\right)\int_{s}^{t^{n+1}}\left({L^{\rm 4th}_h} + {Q^{\rm 4th}_h} \vec{u}_h(\sigma/\varepsilon)\right) d\sigma\,ds\,c_{\varepsilon,h}^{n+1} \\ \nonumber  && + \int_{t^n}^{t^{n+1}}\left({L^{\rm 4th}_h} + {Q^{\rm 4th}_h} \vec{u}_h(s/\varepsilon)\right)\int_{s}^{t^{n+1}}\left({L^{\rm 4th}_h} + {Q^{\rm 4th}_h} \vec{u}_h(\sigma/\varepsilon)\right) \\ \nonumber && \cdot\int_{\sigma}^{t^{n+1}}\left({L^{\rm 4th}_h} + {Q^{\rm 4th}_h} \vec{u}_h(\rho/\varepsilon)\right) d\rho\,d\sigma\,ds\, c_{\varepsilon,h}^{n+1}, \nonumber
	\end{eqnarray} 
	where $c^n_{\varepsilon,h} \approx c_{\varepsilon,h}(t^n)$.

	At this point, we show that the numerical scheme is third order accurate in time, uniformly in $\varepsilon$. {We} subtract Eq.~\eqref{3rd_order_appr_sigma} from Eq.~\eqref{3rd_order_def_sigma_2}{, obtaining} 
	\begin{eqnarray} \nonumber
	 e_{n+1} - e_n = && \int_{t^n}^{t^{n+1}}\left({L^{\rm 4th}_h} + {Q^{\rm 4th}_h} \vec{u}_h(s/\varepsilon)\right) e_{n+1}\,ds \\  \nonumber && - \int_{t^n}^{t^{n+1}}\left({L^{\rm 4th}_h} + {Q^{\rm 4th}_h} \vec{u}_h(s/\varepsilon)\right)\int_{s}^{t^{n+1}}\left({L^{\rm 4th}_h} + {Q^{\rm 4th}_h} \vec{u}_h(\sigma/\varepsilon)\right) e_{n+1}\,d\sigma\,ds \\ \nonumber && + \int_{t^n}^{t^{n+1}}\left({L^{\rm 4th}_h} + {Q^{\rm 4th}_h} \vec{u}_h(s/\varepsilon)\right)\int_{s}^{t^{n+1}}\left({L^{\rm 4th}_h} + {Q^{\rm 4th}_h} \vec{u}_h(\sigma/\varepsilon)\right) \\ \label{eq_en_4th_order} &&\cdot \int_{\sigma}^{t^{n+1}}\left({L^{\rm 4th}_h} + {Q^{\rm 4th}_h} \vec{u}_h(\rho/\varepsilon)\right) (c_{\varepsilon,h}(\rho) - c_{\varepsilon,h}^{n+1})d\rho d\sigma ds.
	\end{eqnarray} 
	{where} $e_n = c_{\varepsilon,h}(t^n) - c_{\varepsilon,h}^n$. We add and subtract the same quantity to the right hand side, saying that, $\, c_{\varepsilon,h}(\rho) - c_{\varepsilon,h}^{n+1} = c_{\varepsilon,h}(\rho) -c_{\varepsilon,h}(t^{n+1})+c_{\varepsilon,h}(t^{n+1})- c_{\varepsilon,h}^{n+1}, \,$ {and Eq.~\eqref{eq_en_4th_order} becomes
 \begin{subequations}
 \begin{eqnarray} \nonumber
	 e_{n+1} - e_n = && \int_{t^n}^{t^{n+1}}\left({L^{\rm 4th}_h} + {Q^{\rm 4th}_h} \vec{u}_h(s/\varepsilon)\right) \,ds\,e_{n+1} \\  \nonumber && - \int_{t^n}^{t^{n+1}}\left({L^{\rm 4th}_h} + {Q^{\rm 4th}_h} \vec{u}_h(s/\varepsilon)\right)\int_{s}^{t^{n+1}}\left({L^{\rm 4th}_h} + {Q^{\rm 4th}_h} \vec{u}_h(\sigma/\varepsilon)\right) \,d\sigma\,ds\,e_{n+1} \\ \label{eq_en_4th_orderA} && + \int_{t^n}^{t^{n+1}}\left({L^{\rm 4th}_h} + {Q^{\rm 4th}_h} \vec{u}_h(s/\varepsilon)\right)\int_{s}^{t^{n+1}}\left({L^{\rm 4th}_h} + {Q^{\rm 4th}_h} \vec{u}_h(\sigma/\varepsilon)\right) \\ \label{eq_en_4th_orderB} &&\cdot \int_{\sigma}^{t^{n+1}}\left({L^{\rm 4th}_h} + {Q^{\rm 4th}_h} \vec{u}_h(\rho/\varepsilon)\right) (c_{\varepsilon,h}(\rho) - c_{\varepsilon,h}(t^{n+1}))d\rho d\sigma ds \\ \nonumber && + \int_{t^n}^{t^{n+1}}\left({L^{\rm 4th}_h} + {Q^{\rm 4th}_h} \vec{u}_h(s/\varepsilon)\right)\int_{s}^{t^{n+1}}\left({L^{\rm 4th}_h} + {Q^{\rm 4th}_h} \vec{u}_h(\sigma/\varepsilon)\right) \\ \nonumber &&\cdot \int_{\sigma}^{t^{n+1}}\left({L^{\rm 4th}_h} + {Q^{\rm 4th}_h} \vec{u}_h(\rho/\varepsilon)\right) d\rho d\sigma ds\,e_{n+1}. 
	\end{eqnarray} 
  \label{eq_en_4th_order2}
 \end{subequations}
 Given the boundedness of $\vec{u}_h(t/\varepsilon)$ and $c_{\varepsilon,h}(t)$, we deduce that the first derivative in time of $c_{\varepsilon,h}(t)$ is also bounded. Thus, there exists a constant $K_1$, independent of $\varepsilon$, such that
	\[ ||\partial_t  c_{\varepsilon,h} ||_{\mathcal{L}^\infty(\Omega)} = \left|\left|\left({L^{\rm 4th}_h} + {Q^{\rm 4th}_h} \vec{u}_h(t/\varepsilon)\right) c _{\varepsilon,h}\right|\right|_{\mathcal{L}^\infty(\Omega)} < K_1.
	\]
	where $K_1 = \left(||{L^{\rm 4th}_h}|| + ||{Q^{\rm 4th}_h}||\,||u||_\infty\right) ||c_{\varepsilon,h}||_{\mathcal{L}^\infty(\Omega)}$. From this estimate, we obtain 
	\begin{equation}
		||c_{\varepsilon,h}(s) - c_{\varepsilon,h}(t^{n+1})||\leq K_1 ||s - t^{n+1}|| \leq K_1 \Delta t, \qquad \forall s \in [t^n,t^{n+1}].
	\end{equation}
Looking at the term in Eqs.~(\ref{eq_en_4th_orderA}-\ref{eq_en_4th_orderB}), it is bounded by
\begin{eqnarray*}
   && \left|\left| \int_{t^n}^{t^{n+1}}\left({L^{\rm 4th}_h} + {Q^{\rm 4th}_h} \vec{u}_h(s/\varepsilon)\right)\int_{s}^{t^{n+1}}\left({L^{\rm 4th}_h} + {Q^{\rm 4th}_h} \vec{u}_h(\sigma/\varepsilon)\right) \right.\right. \\ && \left.\left.\cdot \int_{\sigma}^{t^{n+1}}\left({L^{\rm 4th}_h} + {Q^{\rm 4th}_h} \vec{u}_h(\rho/\varepsilon)\right) (c_{\varepsilon,h}(\rho) - c_{\varepsilon,h}(t^{n+1}))d\rho d\sigma ds \right|\right|  \leq K_1 K_2 \Delta t^2 
\end{eqnarray*}
where $K_2 = ||{L^{\rm 4th}_h} + {Q^{\rm 4th}_h} \vec{u}_h(s/\varepsilon)||_{\mathcal{L}^\infty(\Omega)}$, and, it does not depend on $\varepsilon$. Considering now the norm of Eq.~\eqref{eq_en_4th_order2}, the following inequality holds
%and, using the analogue procedure and the same constants seen in Eq.~\eqref, Eq.~\eqref{eq_en_4th_order} becomes
	\begin{equation*}
		\left( 1 - K_3\Delta t - K_3^2\Delta t^2 - K_3^3\Delta t^3\right) E_{n+1} - K_3^3\Delta t^4 \leq E_n  
	\end{equation*}
	where $E_n = ||e_n||$ and $K_3 = \max\{K_1K_2,K_2\}$. }
	%%%%%%%%%%%%%%%%%%%%%%%%%%%%%%%%%%%%%%
	%%%%%%%%%%%%%%%%%%%%%%%%%%%%%%%%%%%%%%%
	{For the third order accurate numerical scheme}, we add ${K_3^2 \Delta t^3}/({1+K_3\Delta t + K_3^2\Delta t^2})$ to both sides, and, after some algebra, the inequality reads
	\begin{align}
	& E_{n+1} + \frac{K_3^2 \Delta t^3}{1+K_3\Delta t + K_3^2\Delta t^2} \leq  \\ & \left( 1 - K_3\Delta t - K_3^2\Delta t^2 - K_3^3\Delta t^3\right)^{-1} \left(E_{n} +\frac{K_3^2 \Delta t^3}{1+K_3\Delta t + K_3^2\Delta t^2}\right),
	\end{align}
	if $ 1 - K_3\Delta t - K_3^2\Delta t^2 - K_3^3\Delta t^3 > 0$. Recursively, we obtain (using $E_0 = 0$) 
	\[
	E_{n+1} + \frac{K_3^2 \Delta t^3}{1+K_3\Delta t + K_3^2\Delta t^2} \leq \left( 1 - K_3\Delta t - K_3^2\Delta t^2 - K_3^3\Delta t^3\right)^{-n} \frac{K_3^2 \Delta t^3}{1+K_3\Delta t + K_3^2\Delta t^2}.
	\]
	{It} is easy to show that there exists a constant {$ K_4$} that does not depend on $N$, such that {$E_n \leq K_4 \Delta t^3$}. %To conclude the proof, we define $\Delta t_0 = \min\{\widehat K, \widetilde K\}$, where $\widetilde K$ is the positive solution of the equation $1 - K_3\Delta t - K_3^2\Delta t^2 - K_3^3\Delta t^3 = 0$.
 %%%%%%%%%%%%%%%%%%%%%%
 To conclude the proof, {by induction, we can verify that $||c_{\varepsilon,h}(t^n) - c_{\varepsilon,h}^n||\leq K \Delta t^3$,  for all $n = 1,\cdots,M$, provided $\Delta t < \Delta t_0$}, with $\Delta t_0 = \min\{\widehat K, \widetilde K\}$, where $\widetilde K$ is the positive solution of the equation $1 - K_3\Delta t - K_3^2\Delta t^2 - K_3^3\Delta t^3 = 0$.
\end{proof}

\section{Numerical results}
\label{section_results}
In this section, we show the accuracy tests of the different numerical schemes that we derived in the previous sections. The domain considered in the following tests is $\Omega = \left( [-1,1]\times[-1,1]\right)$. We start with the fourth order accurate spatial numerical scheme for the drift-diffusion equation in Eq.~\eqref{eq_4th_space}, and for the one with variable coefficients for the drift term in Eq.~\eqref{eq_4th_space_variable}. 
\begin{figure}[!ht]
	\centering
	\begin{minipage}[b]
		{.49\textwidth}
		\centering
		\begin{overpic}[abs,width=\textwidth,unit=1mm,scale=.25]{Figures/accuracy_Dh_Lh_4th.png}
  \put(0.,53){(a)}
            \end{overpic}
            \end{minipage}
	\begin{minipage}[b]
		{.49\textwidth}
		\centering
		\begin{overpic}[abs,width=\textwidth,unit=1mm,scale=.25]{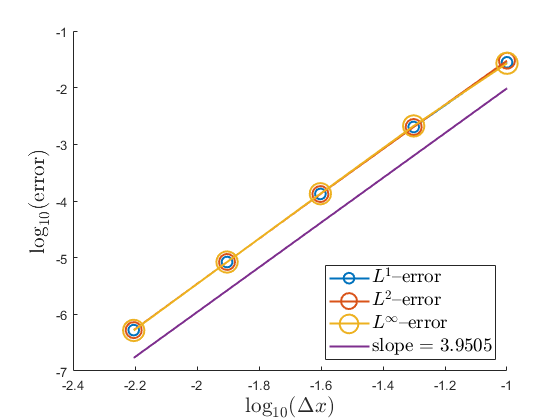}
  \put(0.,53){(b)}
  \end{overpic}
  \end{minipage}
	\caption{\textit{Relative error in $L^1,L^2,L^\infty$--norms, with $t_{\rm fin } = 0.1,$ for a fixed $\Delta t_{\rm ref} = 10^{-5}$ for Eq.~\eqref{eq_4th_space} (a) and Eq.~\eqref{eq_4th_space_variable} (b). The domain is $\Omega = [-1,1]^2$,  $N_{\rm ref} = 640$, the initial condition is defined in Eq.~\eqref{eq_expr_IC_tests}  with $x_{m_1} = y_{m_1} = 0, \sigma = 0.1$ homogeneous Dirichlet boundary conditions (i.e., $f = 0$ in Eq.~\eqref{eq_Dir_bc}) and velocity in Eq.~\eqref{eq_expr_velocity_space} for (b).}}
	\label{fig_4th_space_variable}
\end{figure}
\begin{figure}[H]
	\centering
	\centering
	\hfill
	\begin{minipage}[b]
		{.49\textwidth}
		\centering
		\begin{overpic}[abs,width=\textwidth,unit=1mm,scale=.25]{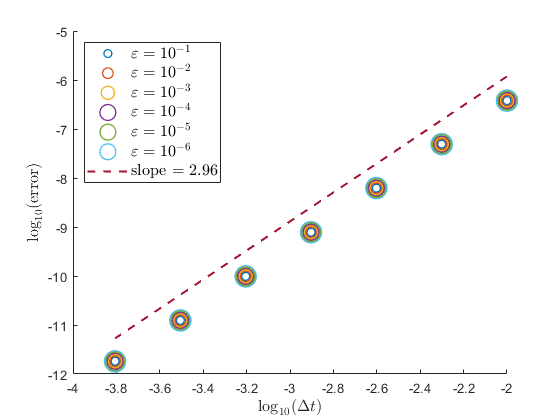}
			\put(0.,44){(a)}
		\end{overpic}	
	\end{minipage}\hfill
	\begin{minipage}[b]
		{.49\textwidth}
		\centering
		\begin{overpic}[abs,width=\textwidth,unit=1mm,scale=.25]{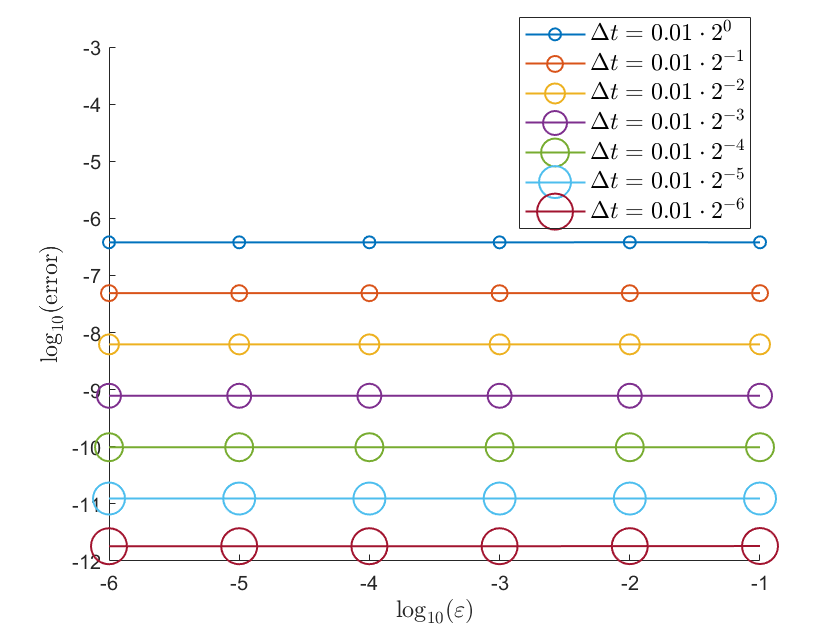}
			\put(0.,44){(b)}
		\end{overpic}	
	\end{minipage}
	\caption{\textit{Plot of the relative  error in $L^2$--norm for the third order scheme in {Eqs.~\eqref{eq_scheme_3rd}} as a function of $\Delta t$ and different $\varepsilon$ (a) and as a function of $\varepsilon$ and different $\Delta t$ (b). The parameters of the tests are: $t_{\rm fin}=0.1, \Delta t_{\rm ref} = 10^{-5},  N_{\rm ref} = 160, D = 0.02, \delta = 10^{-2}$. The domain is $\Omega = [-1,1]^2$, the initial condition is defined in Eq.~\eqref{eq_expr_IC_tests}  with $x_{m_1} = y_{m_1} = 0$ and $\sigma = 0.1$, and the velocity expression in Eq.~\eqref{eq_expr_velocity}, $A = 1$.}}
	\label{fig_time_3rd_order}
\end{figure}
To conclude the accuracy tests of the space discretization, we test the third order accuracy of the numerical scheme defined in Eq.~\eqref{eq_ghost_linear_system}. In this last case, the domain considered is $\Omega = \left( [-1,1]\times[-1,1]\right)\setminus \mathcal{B}$, where $\mathcal{B}$ is a circle centered in $(0,0)$ with radius $R_\mathcal{B} = 0.2$. For the squared external boundaries, we choose homogeneous Dirichlet boundary conditions, i.e., $f = 0$ in Eq.~\eqref{eq_Dir_bc}. Subsequently, we present numerical tests for the third-order accurate time discretization. We focus on the uniformly accuracy in $\varepsilon$ of the numerical method, and after that, we show the advantages of the arbitrariness of choosing the time step $\Delta t$, without losing accuracy.
\begin{table}[H] 
	\centering
	\begin{tabular}{||c||c||c||c||c||}\hline \hline
		N   & ${\rm error}_1$ & ${\rm error}_2$ & ${\rm error}_\infty$ & order \\ \hline \hline
		10 & 2.815$\cdot 10^{-2}$ &  3.049$\cdot 10^{-2}$ & 2.718$\cdot 10^{-2}$ & -- \\ \hline \hline
		20 & 2.069$\cdot 10^{-3}$ &  2.073$\cdot 10^{-3}$ &  2.140$\cdot 10^{-3}$ & 3.77\\ \hline \hline
		40 & 1.363$\cdot 10^{-4}$ & 1.362$\cdot 10^{-4}$ &1.367$\cdot 10^{-4}$ & 3.92
		\\ \hline \hline
		80 & 8.630$\cdot 10^{-6}$ & 8.630$\cdot 10^{-6}$ & 8.589$\cdot 10^{-6}$ & 3.98\\ \hline \hline 
		160 & 5.407$\cdot 10^{-7}$ & 5.409$\cdot 10^{-7}$ &
		5.320$\cdot 10^{-7}$ & 4 \\ \hline \hline
	\end{tabular}
	\caption{\textit{Convergence rate of the spatial numerical discretization defined in Eq.~\eqref{eq_4th_space}. The relative error is calculated in $L^1,L^2,L^\infty$--norms, for a fixed $\Delta t_{\rm ref} = 10^{-5}$ and $t_{\rm fin} = 0.1$. The domain is $\Omega = [-1,1]^2$, the initial condition is defined in Eq.~\eqref{eq_expr_IC_tests}  with $x_{m_1} = y_{m_1} = 0, \sigma = 0.1$ and homogeneous Dirichlet boundary conditions (i.e., $f = 0$ in Eq.~\eqref{eq_Dir_bc}).}}
	\label{table_4th_space}
\end{table}
%%%%%%%%%
At the end, we define three different level set functions for arbitrary domains. The problem we consider is the advection-diffusion equation coupled by homogeneous Dirichlet boundary conditions, in system \eqref{eq:Dirichlet_arbitrary}. For these tests, we show fourth order space accuracy.
%%%%%%
\begin{table}[H]
	\centering
	\begin{tabular}{||c||c||c||c||c||}\hline \hline
		N   & ${\rm error}_1$ & ${\rm error}_2$ & ${\rm error}_\infty$ & order \\ \hline \hline
		10 & 2.800$\cdot 10^{-2}$ &  2.993$\cdot 10^{-2}$ & 2.697$\cdot 10^{-2}$ & -- \\ \hline \hline
		20 & 2.024$\cdot 10^{-3}$ &  2.024$\cdot 10^{-3}$ &  2.112$\cdot 10^{-3}$ & 3.79\\ \hline \hline
		40 & 1.332$\cdot 10^{-4}$ & 1.326$\cdot 10^{-4}$ &1.345$\cdot 10^{-4}$ & 3.93
		\\ \hline \hline
		80 & 8.394$\cdot 10^{-6}$ & 8.393$\cdot 10^{-6}$ & 8.378$\cdot 10^{-6}$ & 3.99\\ \hline \hline 
		160 & 5.199$\cdot 10^{-7}$ & 5.261$\cdot 10^{-7}$ &
		5.320$\cdot 10^{-7}$ & 4 \\ \hline \hline
	\end{tabular}
	\caption{\textit{Convergence rate of the spatial numerical discretization defined in Eq.~\eqref{eq_4th_space_variable}. The relative error is calculated in $L^1,L^2,L^\infty$--norms, for a fixed $\Delta t_{\rm ref} = 10^{-5}$ and $t_{\rm fin} = 0.1$. The domain is $\Omega = [-1,1]^2$, the initial condition is defined in Eq.~\eqref{eq_expr_IC_tests}  with $x_{m_1} = y_{m_1} = 0$ and $\sigma = 0.1$, the velocity in Eq.~\eqref{eq_expr_velocity_space}, and homogeneous Dirichlet boundary conditions (i.e., $f = 0$ in Eq.~\eqref{eq_Dir_bc}).}}
	\label{table_4th_space_variable}
\end{table}

In Fig.~\ref{fig_4th_space_variable} (a), we show the fourth order space accuracy of the numerical scheme defined in Eq.~\eqref{eq_4th_space}. To calculate the error, we add an artificial term {(see for example \cite{roache2002code})} at the numerical scheme in \eqref{eq_4th_space}, as follows 
\begin{align}
	\label{eq_4th_space_F}
	\displaystyle  \frac{c_h^{n+1} - c_h^n}{\Delta t} = & \frac{1}{2} \left(L^{\rm 4th}_h + D_h^{\rm 4th}\right) (c_h^n + c_h^{n+1})  \\ \nonumber & +
 \frac{1}{2} F(c^{\rm exa}(x_h,y_h,t^n))+F(c^{\rm exa}(x_h,y_h,t^{n+1}))
\end{align}
where the time derivative is discretized with a second order Crank-Nicolson method, with $c_h^{n+1}\approx c_h(t^{n+1})$, the expression of $F$ is the following 
\begin{equation}
	F(c^{\rm exa}(x,y,t)) = \frac{\partial}{\partial t} c^{\rm exa}(x,y,t) - \Delta\, c^{\rm exa}(x,y,t) - u\nabla \cdot c^{\rm exa}(x,y,t).
\end{equation}
and the one for the manufactured exact solution is 
\begin{align} \label{eq_manif_exact}
	\displaystyle
	c_h^{\rm exa}(t) = &\cos(t)\exp\left(-\frac{(x-x_{m_1})^2+(y-y_{m_1})^2}{2\sigma^2}\right) + \sin(t)\exp\left(-\frac{(x-x_{m_2})^2+(y-y_{m_2})^2}{2\sigma^2}\right).
\end{align}
After defining the expression of $F$, we calculate the following error
\begin{equation}
	{\rm error}_\beta = \frac{||c_h(t_{\rm fin}) - c^{\rm exa}(x_h,y_h,t)||_\beta}{||c^{\rm exa}(x_h,y_h,t)||_\beta}
\end{equation}
where $\beta = 1,2,\infty$ indicates which norm we are considering. In Fig.~\ref{fig_4th_space_variable} (a) and in Table~\ref{table_4th_space}, we show the numerical error of the space discretization in Eq.~\eqref{eq_4th_space} and the slope is $\approx 3.95$.

Analogously, in Fig.~\ref{fig_4th_space_variable} (b), we show the order of accuracy of the space discretization in Eq.~\eqref{eq_4th_space_variable}. In this case, to calculate the solution at final time we consider the following
\begin{align}
	\label{eq_4th_space_variable_F}
	\displaystyle  \frac{c_h^{n+1} - c_h^n}{\Delta t} = & \frac{1}{2} \left(L^{\rm 4th}_h + Q_h^{\rm 4th}\right) (c_h^n + c_h^{n+1}) \\ \nonumber &+ \frac{1}{2}F(c^{\rm exa}(x_h,y_h,t^n))+F(c^{\rm exa}(x_h,y_h,t^{n+1})).
\end{align}
In this case, the expression for $F$ is 
\begin{equation*}
	F(c_h^{\rm exa}(x,y,t)) = \frac{\partial}{\partial t} c_h^{\rm exa}(x,y,t) - \Delta\, c_h^{\rm exa}(x,y,t) - \nabla \cdot (\vec{u}_h\, c_h^{\rm exa}(x,y,t)),
\end{equation*}
and the expression for $c^{\rm exa}$ is in Eq.~\eqref{eq_manif_exact}. Here we define the expression of the two components of the velocity vector
\begin{equation}
	\label{eq_expr_velocity_space}
	u^x = x^3, \quad u^y = y^3,
\end{equation} 
and the initial condition
\begin{equation}
	\label{eq_expr_IC_tests}
	c_h^0 = c_h(t = 0) = \exp\left(-\frac{(x-x_{m_1})^2+(y-y_{m_1})^2}{2\sigma^2}\right) \equiv c_h^{\rm exa}(t = 0), 
\end{equation}
and in Fig.~\ref{fig_4th_space_variable} (b) and Table~\ref{table_4th_space_variable}, we show the numerical error of the space discretization in Eq.~\eqref{eq_4th_space_variable}, and the slope is $\approx 3.95$.

To calculate the error of the time discretization in {Eqs.~\eqref{eq_scheme_3rd}}, we first compute a reference solution $c_{\varepsilon,h}^{\rm ref}$, choosing a reference time step $\Delta t_{\rm ref}$, number of points $N_{\rm ref}$ and final time $t_{\rm fin}$, for the following set of $\varepsilon = 10^{-k}, k \in \{1,2,3,4,5,6\}$. Then, we calculate different solutions $c^{\Delta t,N}_{\varepsilon,h}$, for different $\Delta t = 0.01\cdot 2^{-k}, k\in \{0,1,2,3,4,5,6\}$ and $N = 20,40,80,160$. After computing all the solutions, we calculate the $L^2-$norm of the relative error as follow:
\begin{equation}
\label{eq_error}
	{\rm error} = \frac{||c_{\varepsilon,h}^{\rm ref} - c^{\Delta t,N}_{\varepsilon,h}||_2}{||c_{\varepsilon,h}^{\rm ref}||_2}.
\end{equation}

\begin{figure}[!ht]
	\centering
	\centering
	\hfill
	\begin{minipage}[b]
		{.49\textwidth}
		\centering
		\begin{overpic}[abs,width=\textwidth,unit=1mm,scale=.25]{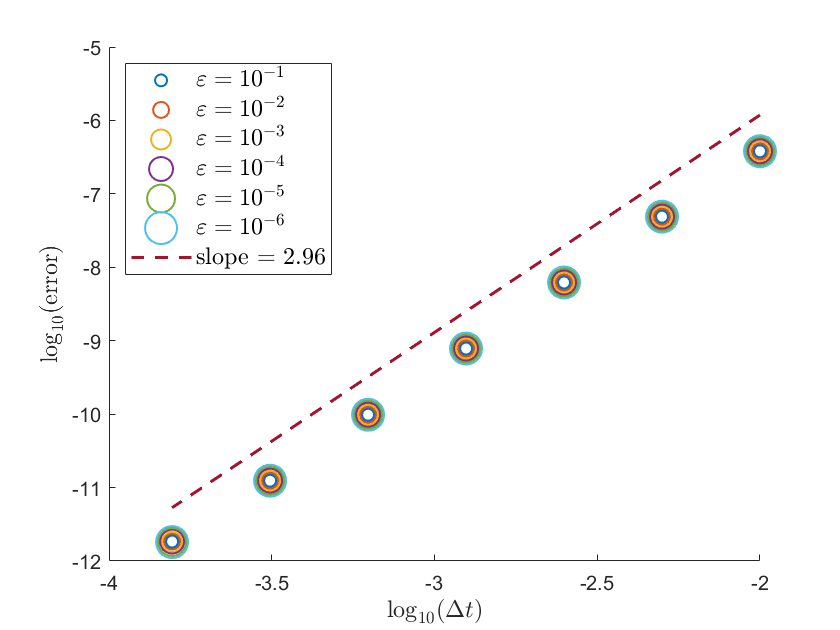}
			\put(0.,44){(a)}
		\end{overpic}	
	\end{minipage}\hfill
	\begin{minipage}[b]
		{.49\textwidth}
		\centering
		\begin{overpic}[abs,width=\textwidth,unit=1mm,scale=.25]{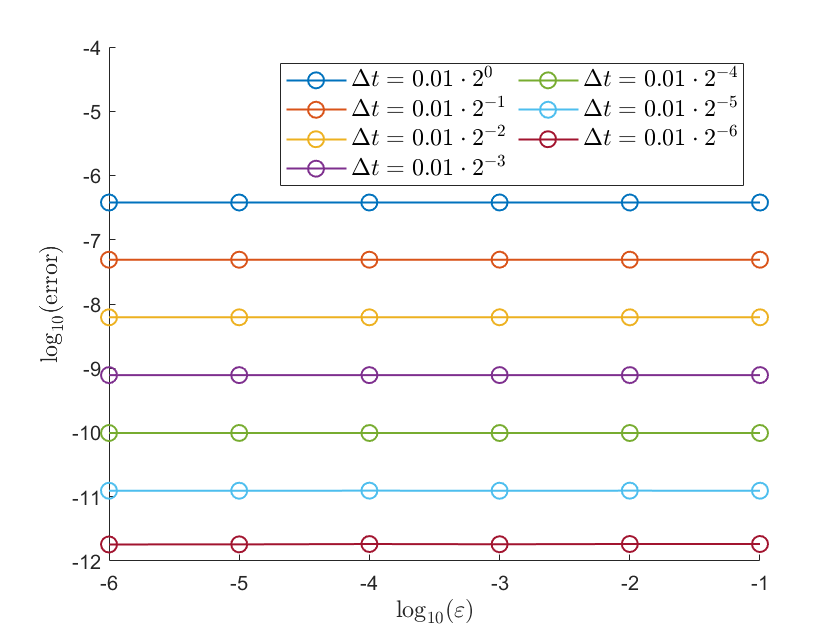}
			\put(0.,44){(b)}
		\end{overpic}	
	\end{minipage}
	\caption{\textit{Plot of the relative $L^2$ error for the third order scheme in {Eqs.~\eqref{eq_scheme_3rd}} as a function of $\Delta t$ and different $\varepsilon$ (a) and as a function of $\varepsilon$ and different $\Delta t$ (b). The parameters of the tests are: $t_{\rm fin}=0.1, \Delta t_{\rm ref} = 10^{-5},  N_{\rm ref} = 160, D = 0.02, \delta = 10^{-2}$. The domain is $\Omega = [-1,1]^2$, the initial condition is defined in Eq.~\eqref{eq_expr_IC_tests}  with $x_{m_1} = y_{m_1} = 0$ and $\sigma = 0.1$, and the velocity expression in Eq.~\eqref{eq_expr_velocity_2}, $A = 10$.}}
	\label{fig_time_3rd_order_2}
\end{figure}

In Figs.~\ref{fig_time_3rd_order}--\ref{fig_time_3rd_order_2} we show the $L^2-$norm of the error, as a function of $\Delta t$ and for different values of $\varepsilon$ in panel (a), and {in} function of $\varepsilon$ and different $\Delta t$ in panel (b). The method that we consider here is the third order accurate numerical scheme defined in {Eqs.~\eqref{eq_scheme_3rd}}, to show that the numerical scheme is uniformly accurate in $\varepsilon$. The considered expressions for the velocity are the following
\begin{eqnarray}
	\label{eq_expr_velocity}
	u^x = A\,x^3\cos(2\pi\,t/\varepsilon), \quad u^y = A\,y^3\cos(2\pi\,t/\varepsilon),\\
	u^x  = A\,\cos(2\pi\,t/\varepsilon)\frac{x}{{x^2 + y^2 + \gamma}}, \quad u^y = A\,\cos(2\pi\,t/\varepsilon)\frac{y}{{x^2 + y^2 + \gamma}},
	\label{eq_expr_velocity_2}
\end{eqnarray}
where $\varepsilon\in\{10^{-1},10^{-2},10^{-3},10^{-4},10^{-5},10^{-6} \}$ and $\gamma = 0.1$ when we consider a regular squared domain, while $\gamma = 0$ when we consider a perforated domain.
In Table~\ref{table_time_3rd_order}, we show the  rate of convergence of the numerical scheme in {Eqs.~\eqref{eq_scheme_3rd}}, at final step $t = 0.1$, for three values of $\varepsilon = 10^{-2},10^{-4},10^{-6}$, and $N_{\rm ts}$ is the number of time steps that we choose.

\begin{figure}[H]
	\centering
	\hfill
	\begin{minipage}
		{.45\textwidth}
		\centering		%\includegraphics[width=\textwidth]{Figures/accuracy_time_indip_eps.png}
		\begin{overpic}[abs,width=\textwidth,unit=1mm,scale=.25]{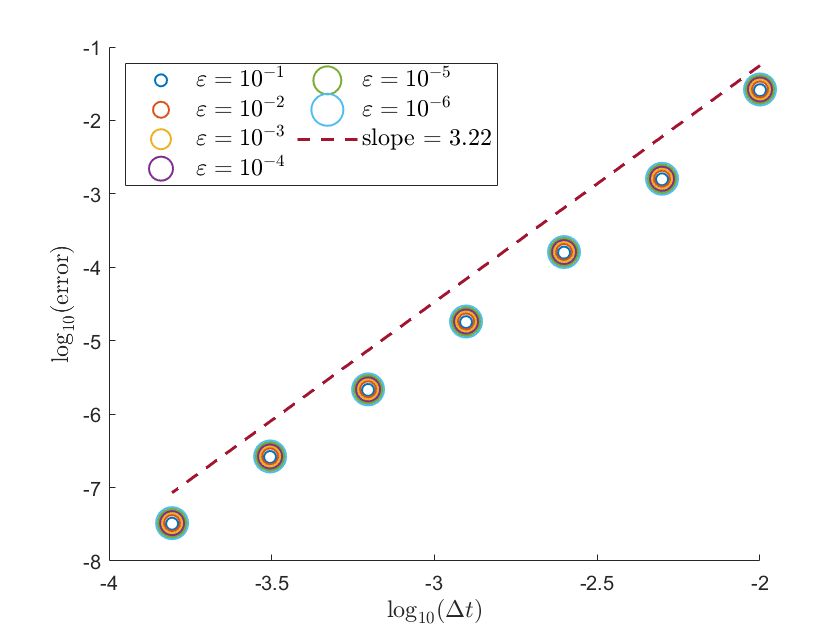}
			\put(0.,44){(a)}
		\end{overpic}	
	\end{minipage}
	\begin{minipage}
		{.45\textwidth}
		\centering
		\begin{overpic}[abs,width=\textwidth,unit=1mm,scale=.25]{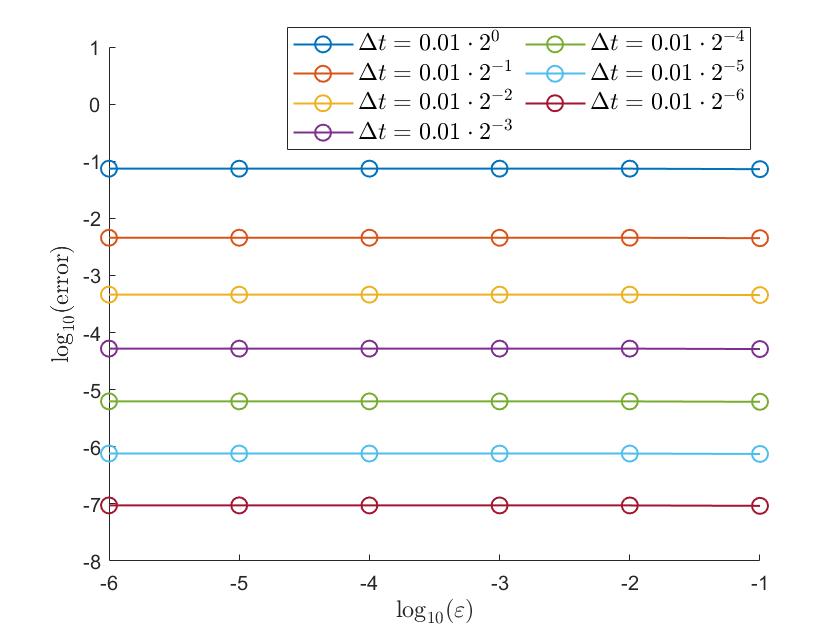}
			\put(0.,44){(b)}
		\end{overpic}
	\end{minipage}
	\hspace*{\fill}
	\caption{\textit{Plot of the relative error in $L^2$ norm for the third order scheme in {Eqs.~\eqref{eq_scheme_3rd}} as a function of $\Delta t$ and different $\varepsilon$ (a) and as a function of $\varepsilon$ and different $\Delta t$ (b). The parameters of the tests are: $t_{\rm fin}=0.1, \Delta t_{\rm ref} = 10^{-5},  N_{\rm ref} = 160, D = 0.01, \delta = 10^{-2}$. The domain is $\Omega = [-1,1]^2\setminus\mathcal{B}$, the initial condition is defined in Eq.~\eqref{eq_expr_IC_tests}  with $x_{m_1} = y_{m_1} = 0.35$ and $\sigma = 0.1$, and the velocity expression in Eq.~\eqref{eq_expr_velocity_2}, $A = 1$.}}
	\label{fig_accuracy_3rd_order_ghost}
\end{figure}
In Fig.~\ref{fig_accuracy_3rd_order_ghost}, we show the $L^2-$norm of the relative error, as a function of $\Delta t$ and for different values of $\varepsilon$ in panel (a), and in function of $\varepsilon$ and different $\Delta t$ in panel (b). Here, we are examining a numerical scheme that achieves third order accuracy in time, defined in {Eqs.~\eqref{eq_scheme_3rd}}, to show that it is uniformly accurate in $\varepsilon$. The domain considered is $\Omega = [0,1]^2\setminus \mathcal{B}$, and the space discretization is defined in Eq.~\eqref{eq_ghost_linear_system}. The expression for the velocity is in Eq.~\eqref{eq_expr_velocity_2}. 
Analogously, in Fig.~\ref{fig_accuracy_3rd_order_ghost_dx}, we show the $L^2-$norm of the relative error, as a function of $\Delta x (=h)$ and for different values of $\varepsilon$ in panel (a), and in function of $\varepsilon$ and different $\Delta x$ in panel (b). Considering the ghost points, it is more challenging to prove the uniform accuracy in space, for two reasons: an additional parameter appears in the system, the thickness $\delta$ of the bubble $\mathcal B$, and an interpolation stencil of 16 points is considered when we solve the linear system for the ghost points. For these reasons, we consider smaller final time $t_{\rm fin} = 10^{-4}$ and $\Delta t_{\rm ref} = 10^{-6}$, and  %a smaller $\Delta t_{\rm ref}$ to prove the uniform accuracy in space. In Fig.~\ref{fig_accuracy_3rd_order_ghost_dx} we choose a final time $t_{\rm fin} = 10^{-4}$ and a $\Delta t_{\rm ref} = 10^{-6}$. The space discretization considered is described in Eqs.~\eqref{eq_ghost_linear_system}, and in this case, 
we do not include the tests for $N = 20$ and $N = 40$, (as we did in the other tests) since there are not enough points in the domain $\Omega_h$ to create the structure of the 16-point stencil for each ghost point. 

The results in Fig.~\ref{fig_accuracy_3rd_order_ghost} show a significant improvement respect to those obtained in \cite{astuto2023time}. In that study, the time numerical scheme is uniformly accurate in $\varepsilon$, and there is no interaction between $\varepsilon$ and the time step $\Delta t$, defined in the method. By the way, the space discretization affects the values of the errors. The results are shown  for a fixed space step $\Delta x$, highlighting a reduction of the errors for $\varepsilon < \Delta x$. We affirm that the proposed numerical scheme is uniform accurate in time, with a strong influence of the space discretization that shows that for $\varepsilon > \Delta x$, the values of the errors are larger then the other cases. The results presented here provide evidences of the lack of correlation between $\varepsilon $ and $\Delta t$, indeed, with a fourth order space discretization, the uniform accuracy in time is much better performed, for all $\varepsilon$.

\begin{figure}[H]
	\centering
	\hfill
	\begin{minipage}
		{.45\textwidth}
		\centering		%\includegraphics[width=\textwidth]{Figures/accuracy_time_indip_eps.png}
		\begin{overpic}[abs,width=\textwidth,unit=1mm,scale=.25]{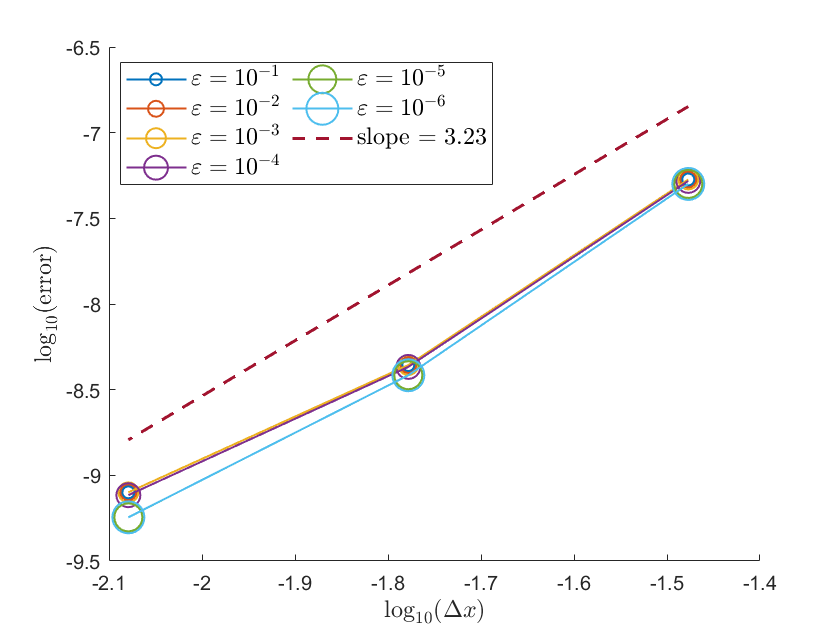}
			\put(0.,48){(a)}
		\end{overpic}	
	\end{minipage}
	\begin{minipage}
		{.45\textwidth}
		\centering
		\begin{overpic}[abs,width=\textwidth,unit=1mm,scale=.25]{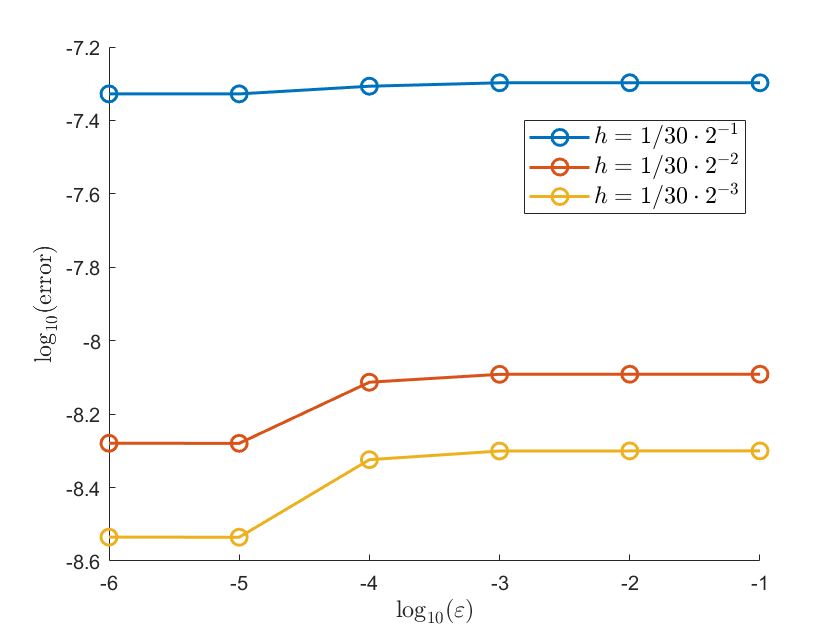}
			\put(0.,48){(b)}
		\end{overpic}
	\end{minipage}
	\hspace*{\fill}
	\caption{\textit{Plot of the relative error in $L^2$ norm for the third order scheme in {Eqs.~\eqref{eq_scheme_3rd}} as a function of ${\Delta x}$ and different $\varepsilon$ (a) and as a function of $\varepsilon$ and different ${\Delta x}$ (b). The parameters of the tests are: $t_{\rm fin}=10^{-4}, \Delta t_{\rm ref} = 10^{-6},  N_{\rm ref} = 480, D = 0.01, \delta = 10^{-2}$. The domain is $\Omega = [-1,1]^2\setminus\mathcal{B}$, the initial condition is defined in Eq.~\eqref{eq_expr_IC_tests}  with $x_{m_1} = y_{m_1} = 0.3$ and $\sigma = 0.1$, and the velocity expression in Eq.~\eqref{eq_expr_velocity_2}, $A = 1$.}}
\label{fig_accuracy_3rd_order_ghost_dx}
\end{figure}
\begin{figure}[H]
	\centering
	\hfill
	\begin{minipage}
		{.45\textwidth}	\centering
		\begin{overpic}[abs,width=\textwidth,unit=1mm,scale=.25]{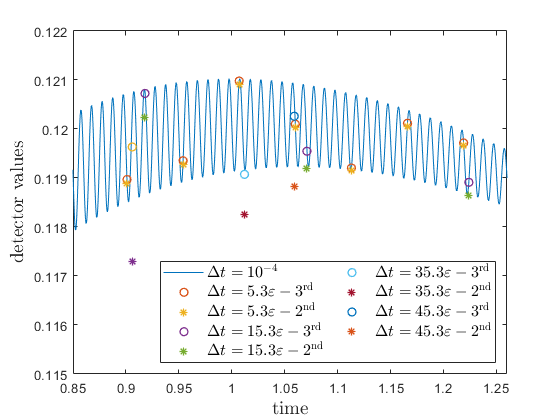}
			\put(0.,48){(a)}
			\put(22.,49){$\varepsilon = 10^{-2}$}
		\end{overpic}
	\end{minipage}\hfill
	\begin{minipage}
		{.45\textwidth}	\centering
		\begin{overpic}[abs,width=\textwidth,unit=1mm,scale=.25]{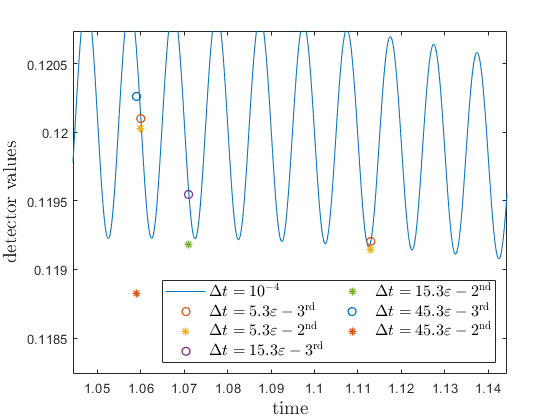}
			\put(0.,48){(b)}
			\put(22.,49){$\varepsilon = 10^{-2}$}
		\end{overpic}
	\end{minipage}
	\hspace*{\fill}
	\caption{\textit{Comparison of the values of the concentration $c_{\varepsilon,h}$ at the detector, between the $2^{\rm nd}$ order scheme (see {Eqs.\eqref{eq_scheme_2nd}}) and the $3^{\rm rd}$ order scheme (see {Eqs.~\eqref{eq_scheme_3rd}}). We show different values of $\Delta t$ (a), and a zoom-in in panel (b). The domain is $\Omega = [-1,1]^2\setminus\mathcal{B}$, the velocity defined in Eq.~\eqref{eq_expr_velocity_2}, the initial condition is defined in Eq.~\eqref{eq_expr_IC_tests} and the detector is centered in $P=(0,-0.5)$, $P\in\Omega$. The parameters of the test are: $\varepsilon = 10^{-2}, N = 80, A = 100, \delta = 10^{-3}, x_{m_1} = 0, y_{m_1} = 0$ and $D = 0.01$.}}
	\label{fig_detector_cos_1em2}
\end{figure}
Figs.~\ref{fig_detector_cos_1em2}--\ref{fig_detector_cos_1em3} represent a qualitative comparison between the second (asterisks in the plots, see {Eqs.\eqref{eq_scheme_2nd}}{)} and the third order numerical scheme (circles in the plots, see {Eqs.~\eqref{eq_scheme_3rd}} of the time evolution of the solution at the detector located in $P\in \Omega$. The expression for the velocity is defined in Eq.~\eqref{eq_expr_velocity_2}, with $A = 100$. We choose two different values of $\varepsilon$: in Fig.~\ref{fig_detector_cos_1em2}, $\varepsilon = 0.01$ and in Fig.~\ref{fig_detector_cos_1em3} $\varepsilon = 0.001$. We show a reference solution, $c_{\varepsilon,h}^{\rm ref}$ (blue line) with $\Delta t_{\rm ref} = 10^{-4}$, together with different solutions for different time steps.
\begin{figure}[H]
	\centering
	\hfill
	\begin{minipage}
		{.45\textwidth}	\centering
		\begin{overpic}[abs,width=\textwidth,unit=1mm,scale=.25]{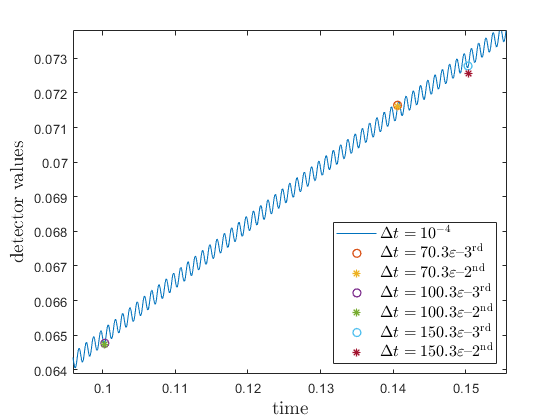}
			\put(0.,48){(a)}
			\put(22.,49){$\varepsilon = 10^{-3}$}
		\end{overpic}
	\end{minipage}\hfill
	\begin{minipage}
		{.45\textwidth}	\centering
		\begin{overpic}[abs,width=\textwidth,unit=1mm,scale=.25]{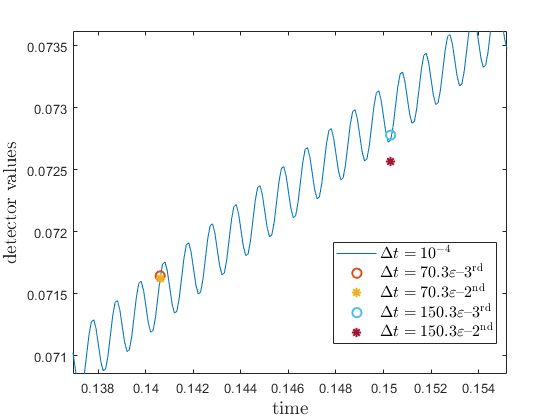}
			\put(0.,48){(b)}
			\put(22.,49){$\varepsilon = 10^{-3}$}
		\end{overpic}
	\end{minipage}
	\hspace*{\fill}
	\caption{\textit{Comparison of the values of the concentration $c_{\varepsilon,h}$ at the detector, between the $2^{\rm nd}$ order scheme (see {Eqs.\eqref{eq_scheme_2nd}}) and the $3^{\rm rd}$ order scheme (see {Eqs.~\eqref{eq_scheme_3rd}}). We show different values of $\Delta t$ (a), and a zoom-in in panel (b). The domain is $\Omega = [-1,1]^2\setminus\mathcal{B}$, the velocity defined in Eq.~\eqref{eq_expr_velocity_2}, the initial condition is defined in Eq.~\eqref{eq_expr_IC_tests} and the detector is centered in $P=(0,-0.5)$, $P\in\Omega$. The parameters of the test are: $\varepsilon = 10^{-3}, N = 80, A = 100, \delta = 10^{-3}, x_{m_1} = 0, y_{m_1} = 0$ and $D = 0.01$.}}
	\label{fig_detector_cos_1em3}
\end{figure}

 In Fig.~\ref{fig_cpu} we show the advantage in CPU time when using the third order accurate numerical scheme. For a given computational time, the third order scheme shows that its accuracy is at least one order of magnitude higher then that of the second order scheme. We show the error defined in Eq.~\eqref{eq_error},  with $N = 20$, $\Delta t_{\rm ref} = 10^{-4}$ and \\$\Delta t \in \{1/5, 1/10, 1/20, 1/30, 1/40, 1/50, 1/60, 1/70, 1/80, 1/90, 1/100\}$. Finally, in Fig.~\ref{fig_detector_cos_1em3_ghost} we show the evolution in time of the solution of the numerical scheme defined in {Eqs.~\eqref{eq_scheme_3rd}}, whose space discretization is defined in Eqs.~\eqref{eq_ghost_linear_system}, in presence of ghost points. As before, we evaluate the solution in a point $P\in \Omega$, and the velocity is defined in Eq.~\eqref{eq_expr_velocity_2}, with $A = 100$.

\subsection*{Arbitrary domains and level-set functions}
In this subsection, we focus on different geometries. We aim to solve an advection-diffusion equation in arbitrary domains with homogeneous Dirichlet boundary conditions. The system reads
\begin{eqnarray} 
\label{eq:Dirichlet_arbitrary}
	\left\{
	\begin{array}{l}
		\displaystyle     \frac{\partial c_\varepsilon}{\partial t} = D\Delta c_\varepsilon + \nabla \cdot (c_\varepsilon \vec{u}(t/\varepsilon))  \quad \text{ in $\Omega$} \\
		\displaystyle c_\varepsilon = 0 \quad  \text{ on $\Gamma$}
	\end{array}
	\right.
\end{eqnarray}
where the velocity $\vec{u}(t/\varepsilon) = \vec{u}(x,y,t/\varepsilon)$ is given by Eq.~\eqref{eq_expr_velocity_2}, with $\gamma = 0.1$. Here,  Dirichlet boundary conditions are employed, eliminating the need for interpolating the spatial derivatives of the numerical solution as we did in Eq.~\eqref{QHghost}. Adopting the same 16-point stencil in Fig.~\ref{stencil} (b), this time we prove fourth order accuracy for the space discretization.

The expression of the different level-set functions that consider are the following:

\subsubsection*{Ellipsoidal domain}
Let us consider the following level-set function
\[\phi(x,y) = \frac{(X - \sqrt{2}/20)^2}{0.7^2} + \frac{(Y - \sqrt{3}/30)^2}{0.45^2} - 1, \]
where $X = \cos(\pi/6)x-\sin(\pi/6)y$, and $Y = \cos(\pi/6)x + \sin(\pi/6)y.$
%\[ \]

\subsubsection*{Flower-shape domain}
Let us consider the following level-set function
\begin{align*}
\phi(x,y) = \frac{R - 0.52 - (Y^5 +5X^4Y-10X^2Y^3)}{5R^5}
\end{align*}
where $X = x-0.03\sqrt 3,\, Y = y-0.04\sqrt 2,$ and $R = \sqrt{X^2+Y^2}.$

\begin{figure}[H]
    \centering
    \includegraphics[width=0.65\textwidth]{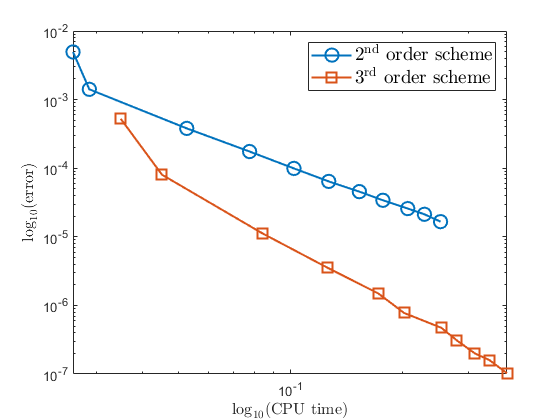}
    \caption{\textit\textit{Comparison of the CPU time between the $2^{\rm nd}$ order scheme (blue circles, see {Eqs.\eqref{eq_scheme_2nd}}) and the $3^{\rm rd}$ order scheme (red squares, see {Eqs.~\eqref{eq_scheme_3rd}}). We show the error defined in Eq.~\eqref{eq_error} with $\Delta t_{\rm ref} = 10^{-5}$ for different $\Delta t \in \{1/5, 1/10, 1/20, 1/30, 1/40, 1/50, 1/60, 1/70, 1/80, 1/90, 1/100\}$.
    The parameters of the test are: $\varepsilon = 10^{-3}, N = 20, A = 100, \delta = 10^{-3}, t_{\rm fin} = 1$ and $D = 0.01$.}}
    \label{fig_cpu}
\end{figure}
\subsubsection*{Cardioid-shape domain}
Let us consider the following level-set function
\[\phi(x,y) = \left(3\left( X^2 + Y^2 \right) -X \right)^2 - X^2 - Y^2, \]
where $X = x-0.04\sqrt{3}-0.35$ and $Y = y-0.05\sqrt{2}.$

In Fig.\ref{fig_arbitrary_domain_results}
we show the numerical results when solving Eq.~\eqref{eq:Dirichlet_arbitrary}, for the arbitrary domains: ellipsoidal (first column), flower-shaped (second column) and cardioid-shaped domains (third column). We have the $L^2-$norm of the error, as a function of $\Delta x$ and for different values of $\varepsilon$ in panel (a)-(c), and in function of $\varepsilon$ and different $\Delta x$ in panel (d)-(f).  We remark here that the order of accuracy in space is four due to the choice of Dirichlet boundary conditions. We remind that with the boundary conditions defined in Eq.~\eqref{QHghost}, the space discretization was third order. In panels (g)-(i), we show the detector values of the time evolution of the solution at the detector located in $P\in \Omega$, with $\varepsilon = 0.001$.  We show a reference solution, $c_{\varepsilon,h}^{\rm ref}$ (blue line) with $\Delta t_{\rm ref} = 10^{-4}$, together with different solutions for different time steps. {The method is implemented in Matlab on a Dell Inspiron 13-5379, 8th Generation Intel Core i7, 16GB RAM.
}

% \todo[inline]{aggiungere test con ghost}

%%%%%%%%%%%%%%%%%%%%%%%%%% HERE

\begin{figure}[htp]
	\centering
	\hfill
	\begin{minipage}
		{.45\textwidth}	\centering
		\begin{overpic}[abs,width=\textwidth,unit=1mm,scale=.25]{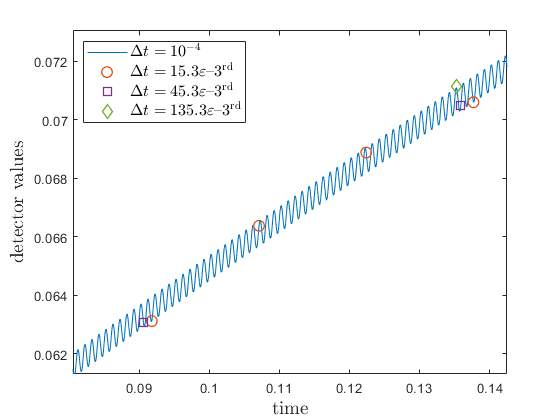}
			\put(0.,48){(a)}
			\put(22.,49){$\varepsilon = 10^{-3}$}
		\end{overpic}
	\end{minipage}\hfill
	\begin{minipage}
		{.45\textwidth}	\centering
		\begin{overpic}[abs,width=\textwidth,unit=1mm,scale=.25]{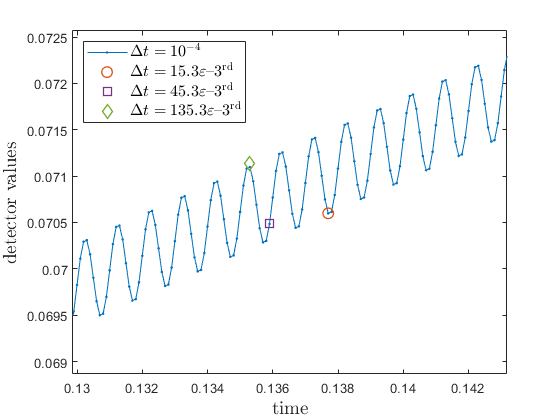}
			\put(0.,48){(b)}
			\put(22.,49){$\varepsilon = 10^{-3}$}
		\end{overpic}
	\end{minipage}
	\hspace*{\fill}
	\caption{\textit{Detector values of the concentration $c_{\varepsilon,h}$ for the $3^{\rm rd}$ order numerical scheme in {Eqs.~\eqref{eq_scheme_3rd}}. We show different values of $\Delta t$ (a), and a zoom-in in panel (b). The domain is $\Omega = [-1,1]^2\setminus \mathcal B$, the velocity defined in Eq.~\eqref{eq_expr_velocity_2}, the initial condition is defined in Eq.~\eqref{eq_expr_IC_tests} and the detector is centered in $P=(0.35,0.35)$, $P\in\Omega$. The parameters of the test are: $\varepsilon = 10^{-3}, N = 80, A = 100, \delta = 10^{-3}, x_{m_1} =  y_{m_1} = 0.5$ and $D = 0.01$.}}
	\label{fig_detector_cos_1em3_ghost}
\end{figure}

\begin{figure}[H]
	\centering
	\begin{minipage}[b]
		{.33\textwidth}
		\centering
		\begin{overpic}[abs,width=1.\textwidth,unit=1mm,scale=.25]{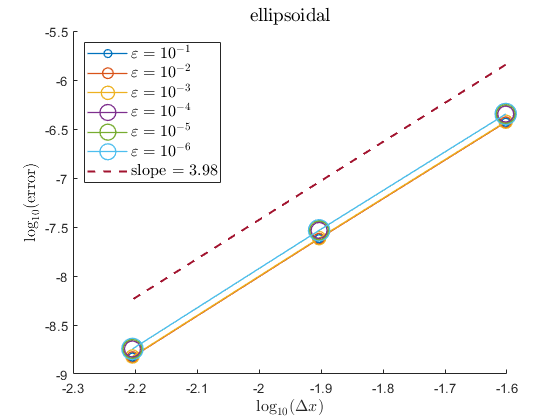}
			\put(0,36){(a)}
		\end{overpic}
	\end{minipage}\hfill
	\begin{minipage}[b]{.33\textwidth}
		\begin{overpic}[abs,width=\textwidth,unit=1mm,scale=.25]{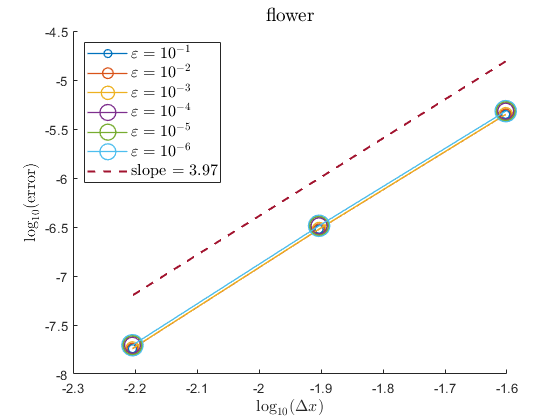}
			\put(0,36){(b)}
		\end{overpic}
	\end{minipage}
 \begin{minipage}[b]{.33\textwidth}
		\begin{overpic}[abs,width=\textwidth,unit=1mm,scale=.25]{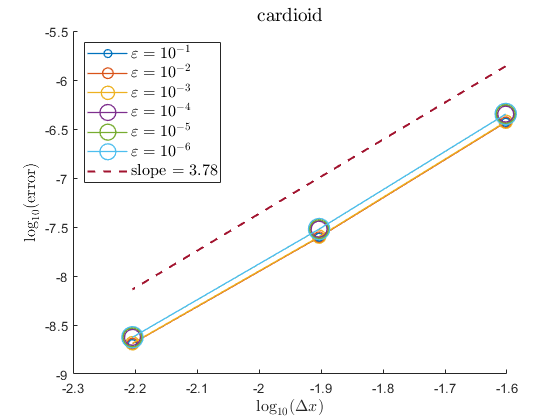}
			\put(0,36){(c)}
		\end{overpic}
	\end{minipage}
 \begin{minipage}[b]
		{.33\textwidth}
		\centering
		\begin{overpic}[abs,width=1.\textwidth,unit=1mm,scale=.25]{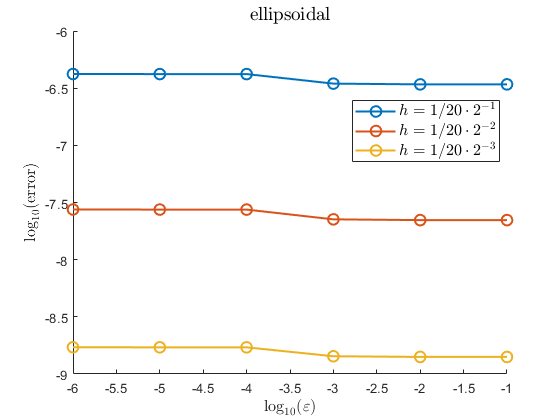}
			\put(0,36){(d)}
		\end{overpic}
	\end{minipage}\hfill
	\begin{minipage}[b]{.33\textwidth}
		\begin{overpic}[abs,width=\textwidth,unit=1mm,scale=.25]{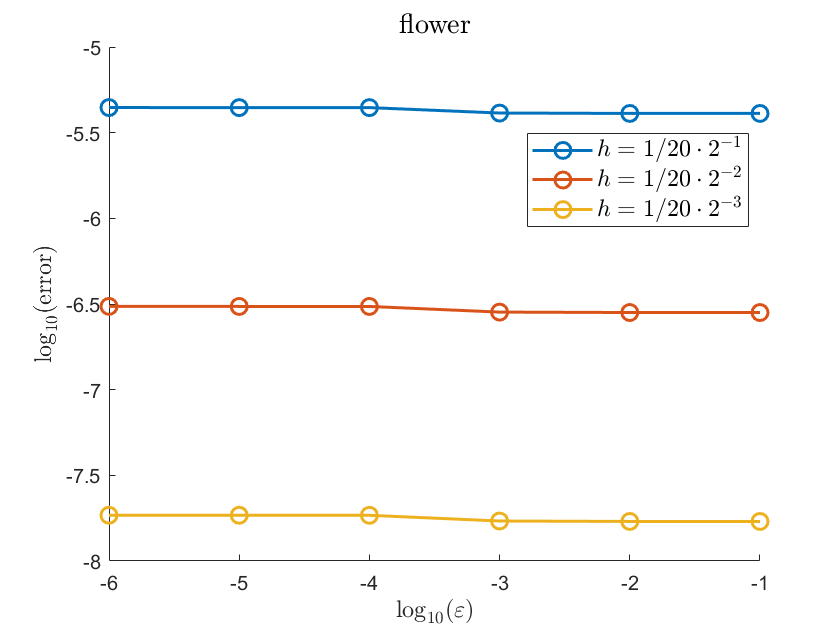}
			\put(0,36){(e)}
		\end{overpic}
	\end{minipage}
 \begin{minipage}[b]{.33\textwidth}
		\begin{overpic}[abs,width=\textwidth,unit=1mm,scale=.25]{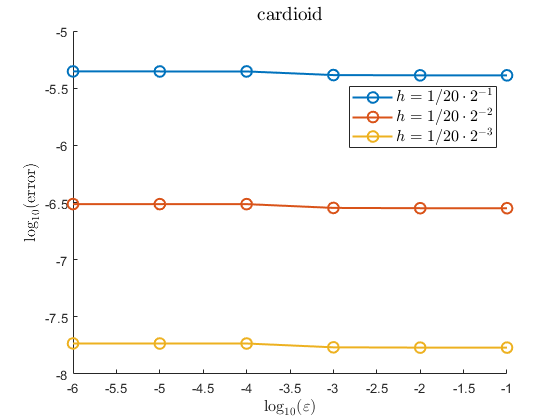}
			\put(0,36){(f)}
		\end{overpic}
	\end{minipage}
  \begin{minipage}[b]
		{.33\textwidth}
		\centering
		\begin{overpic}[abs,width=1.\textwidth,unit=1mm,scale=.25]{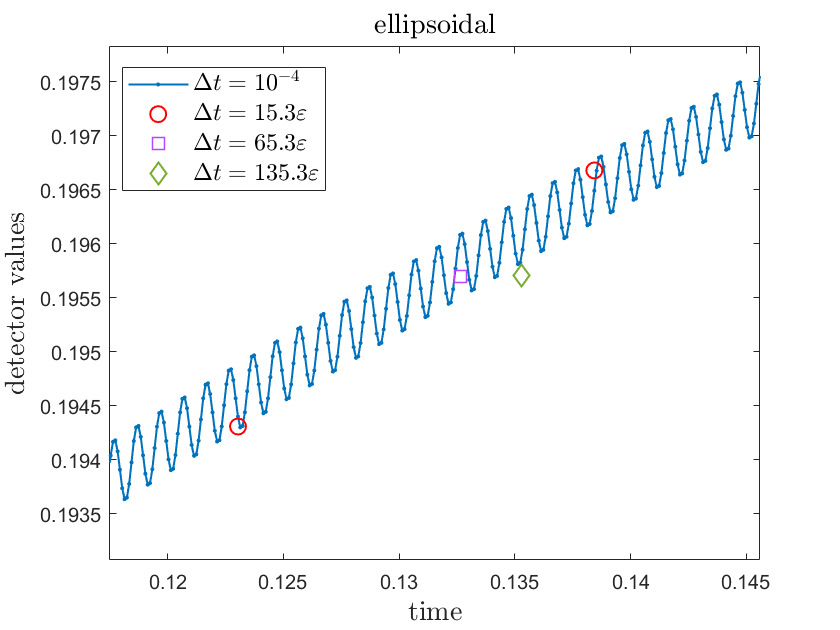}
			\put(0,36){(g)}
		\end{overpic}
	\end{minipage}\hfill
	\begin{minipage}[b]{.33\textwidth}
		\begin{overpic}[abs,width=\textwidth,unit=1mm,scale=.25]{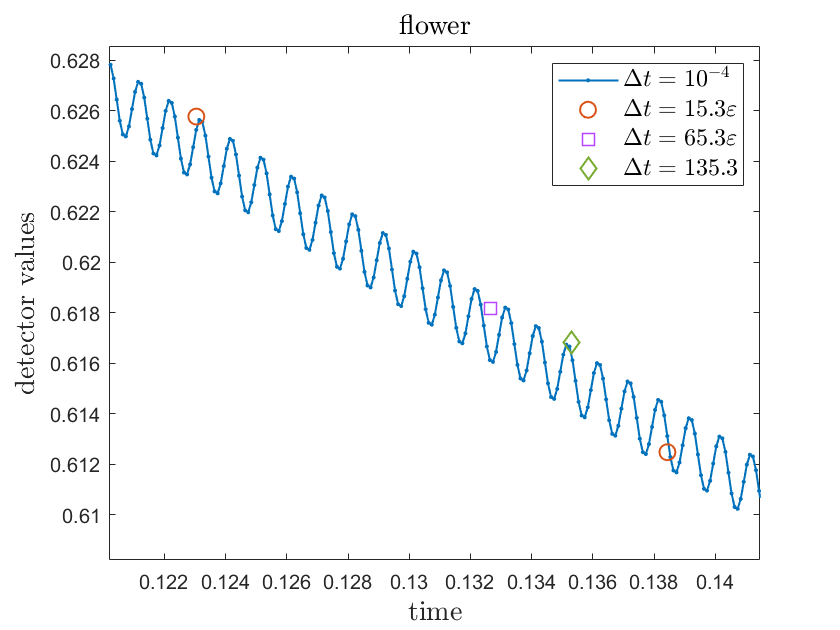}
			\put(0,36){(h)}
		\end{overpic}
	\end{minipage}
 \begin{minipage}[b]{.33\textwidth}
		\begin{overpic}[abs,width=\textwidth,unit=1mm,scale=.25]{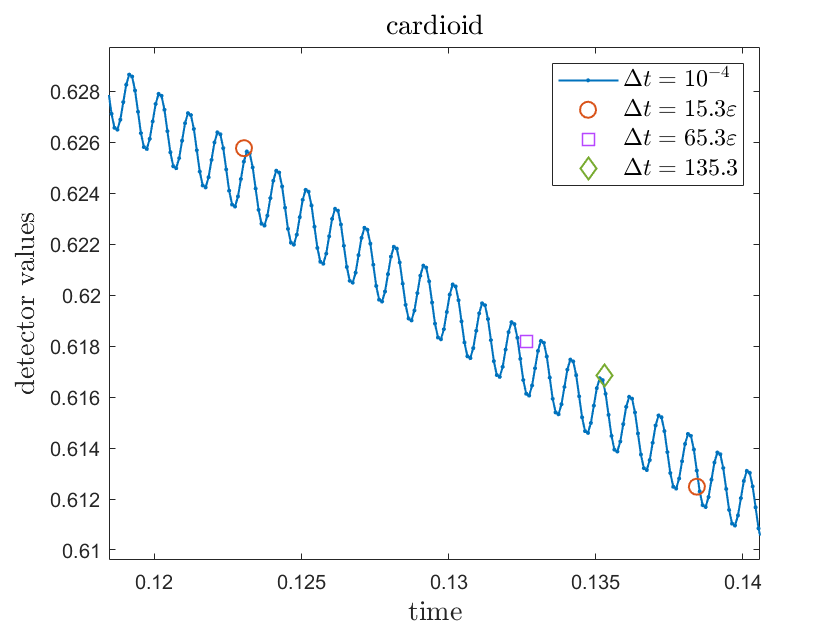}
			\put(0,36){(i)}
		\end{overpic}
	\end{minipage}
	\caption{\textit{As shown in the title of each panel, this figure shows different features for three different domains: ellipsoidal, flower-shaped and cardioid-shaped domains (see Eq.~\eqref{eq:Dirichlet_arbitrary}). We plot the relative error in space in $L^2$ norm, as a function of $h$ and different $\varepsilon$ in (a)-(c) and as a function of $\varepsilon$ and different $h$ in (d)-(f). The parameters of these tests are: $t_{\rm fin}=10^{-4}, \Delta t_{\rm ref} = 10^{-6},  N_{\rm ref} = 640, D = 0.01, \delta = 10^{-2}$. The initial condition is defined in Eq.~\eqref{eq_expr_IC_tests}  with $x_{m_1} = y_{m_1} = 0$ and $\sigma = 0.1$, and the velocity expression in Eq.~\eqref{eq_expr_velocity_2}, with $A = 1$ in (a)-(f). In panels (g)-(i), we show the detector values of the concentration $c_{\varepsilon,h}$, and the detector is centered in $P=(0,0)$, $P\in\Omega$. The parameters of the test are: $\varepsilon = 10^{-3}, N = 80, A = 100, \delta = 10^{-3}, x_{m_1} =  y_{m_1} = 0$ and $D = 0.01$.}}
	\label{fig_arbitrary_domain_results}
\end{figure}

\begin{landscape}
\begin{table}[h]
	\centering
	\begin{tabular}{|c|c|c|c|c|c|c|c|}
		\hline \hline
		$\varepsilon = 10^{-2}$   \\ \hline
		${\rm N}_{\rm ts}$ & $10\cdot 2^0$  & $10\cdot 2^1$  & $10\cdot 2^2$  & $10\cdot 2^3$ & $10\cdot 2^4$  & $10\cdot 2^5$ & $10\cdot 2^6$ \\ \hline
		{\rm error} & 3.805$\cdot 10^{-7}$ & 4.894$\cdot 10^{-8}$ & 6.209$\cdot 10^{-9}$ & 7.820$\cdot 10^{-10}$ & 9.820$\cdot 10^{-11}$ & 1.241$\cdot 10^{-11}$ & 1.819$\cdot 10^{-12}$ \\ \hline       {\rm order} & -- & 2.96 & 2.98 & 2.99 & 2.99 & 2.98 & 2.77 \\
		\hline \hline    
		$\varepsilon = 10^{-4}$   \\ \hline
		${\rm N}_{\rm ts}$ & $10\cdot 2^0$  & $10\cdot 2^1$  & $10\cdot 2^2$  & $10\cdot 2^3$ & $10\cdot 2^4$  & $10\cdot 2^5$ & $10\cdot 2^6$ \\ \hline
		{\rm error} & 3.801$\cdot 10^{-7}$ & 4.894$\cdot 10^{-8}$ & 6.209$\cdot 10^{-9}$ & 7.820$\cdot 10^{-10}$ & 9.820$\cdot 10^{-11}$ & 1.241$\cdot 10^{-11}$ & 1.816$\cdot 10^{-12}$ \\ \hline       {\rm order} & -- & 2.96 & 2.98 & 2.99 & 2.99 & 2.98 & 2.77 \\
		\hline \hline    
		$\varepsilon = 10^{-6}$   \\ \hline
		${\rm N}_{\rm ts}$ & $10\cdot 2^0$  & $10\cdot 2^1$  & $10\cdot 2^2$  & $10\cdot 2^3$ & $10\cdot 2^4$  & $10\cdot 2^5$ & $10\cdot 2^6$ \\ \hline
		{\rm error} & 3.801$\cdot 10^{-7}$ & 4.893$\cdot 10^{-8}$ & 6.209$\cdot 10^{-9}$ & 7.820$\cdot 10^{-10}$ & 9.820$\cdot 10^{-11}$ & 1.247$\cdot 10^{-11}$ & 1.812$\cdot 10^{-12}$ \\ \hline       {\rm order} & -- & 2.96 & 2.98 & 2.99 & 2.99 & 2.98 & 2.77 \\
		\hline \hline
	\end{tabular}
	\caption{\textit{Convergence rate of the time numerical discretization defined in {Eqs.~\eqref{eq_scheme_3rd}}. The error is calculated in $L^2-$norm, for  different values of  $\varepsilon \in \{ 10^{-2}, 10^{-4}, 10^{-6}\}$. The parameters of the tests are: $t_{\rm fin}=0.1, \Delta t_{\rm ref} = 10^{-5},  N_{\rm ref} = 160, D = 0.02, \delta = 10^{-2}$. The domain is $\Omega = [-1,1]^2\setminus\mathcal{B}$, the initial condition is defined in Eq.~\eqref{eq_expr_IC_tests}  with $x_{m_1} = y_{m_1} = 0$ and $\sigma = 0.1$, and the velocity expression in Eq.~\eqref{eq_expr_velocity}, $A = 1$.}}
	\label{table_time_3rd_order}
\end{table}
\end{landscape}

\section{Conclusions}
\label{sec:conclusions}
In this work, we have presented a recursive technique to construct  numerical schemes that are uniformly accurate in time, of arbitrary order, for highly oscillatory equations. Starting from a numerical scheme of first order, we show how to obtain a time discretization of the desired order of accuracy. This paper is the natural continuation of the work presented in \cite{astuto2023time}, wherein we consider higher order discretization in space and time. Notably, we observe significant improvements in the values of the relative errors, particularly in achieving uniform accuracy, attributable to the adoption of fourth-order spatial discretization. In addition, we intend to implement a novel approach to spatial discretization. Specifically, we aim to adopt a ghost nodal variational formulation, as proposed in \cite{astuto2024nodal}. This formulation will be extended to support high-order accuracy, allowing a comparison with the current spatial discretization approach.

Our primary focus lies in capturing the multiple spatial and temporal scales present in the diffusion and trapping dynamics of a surfactant around a rapidly oscillating trap. This investigation, outlined in \cite{CiCP-31-707, COCO2020109623, ASTUTO2023111880}, aims to investigate the influence of cell membrane oscillations on the capture efficiency of substances diffusing near the cell. In this real-world scenario, there are several orders of magnitude of difference between oscillation amplitudes and frequencies, necessitating the use of high-order numerical schemes to accurately model the phenomenon.

In this study %, we operate under the assumption that the fluid velocity is a known function of space and time, and 
we extend the study to domains of arbitrary shapes, to show the versatility and effectiveness of the proposed numerical schemes. Significant improvements are observed when employing the same numerical scheme for Dirichlet boundary conditions, where we do not need the interpolation of the numerical solution to obtain approximations of its space derivatives at the boundary. In this case, a fourth order accurate numerical scheme in space is obtained.

Expanding on our current research, we aim to explore a two-way coupling mechanism where the motion of the bubble is not explicitly known, but it is influenced by the surrounding fluid dynamics. This coupling introduces a dynamic interplay wherein the bubble behavior is not only influenced by the fluid but also dictates fluid motion. A way to do that, is the coupling of the system with Navier-Stokes equations, as we did in \cite{ASTUTO2023111880}. Furthermore, we want to study the relationship between surface tension and local surfactant concentration, thereby uncovering how variations in surfactant concentration impact bubble dynamics and, consequently, fluid behavior \cite{speirs2018entry}.

\section*{Acknowledgments}
{This work has been supported by the Spoke 10 Future AI Research (FAIR) of the Italian Research Center funded by the Ministry of University and Research as part of the National Recovery and Resilience Plan (PNRR). \\
This work has also been supported also by Italian Ministerial grant PRIN 2022 “Efficient numerical schemes and optimal control methods for time-dependent partial differential equations”, No. 2022N9BM3N - Finanziato dall’Unione europea - Next Generation EU – CUP: E53D23005830006.} \\
{This work has also been supported by the Italian Ministerial grant PRIN 2022 PNRR “FIN4GEO: Forward and Inverse Numerical Modeling of hydrothermal systems in volcanic regions with application to geothermal energy exploitation”, No. P2022BNB97 - Finanziato dall’Unione europea - Next Generation EU – CUP: E53D23017960001. } \\
{The author is a member of the Gruppo Nazionale Calcolo Scientifico-Istituto Nazionale di Alta Matematica (GNCS-INdAM).}

%%%% Bibliography  %%%%%%%%%%

\end{document}